\newtheorem{Theorem}{Theorem}
\newtheorem{Lemma}[Theorem]{Lemma}
\newtheorem{Problem}{Problem}
\newtheorem{Corollary}[Theorem]{Corollary}
\theoremstyle{definition}
\newtheorem{Definition}[Theorem]{Definition}
\newtheorem*{planA}{Plan A}
\newtheorem*{Example}{Example}
\newtheorem*{Remark}{Remark}
\newtheorem*{Convention}{Convention}
\DeclareMathOperator{\im}{Im}
\let\ker\relax
\DeclareMathOperator{\ker}{Ker}
\DeclareMathOperator{\supp}{supp}
\DeclareMathOperator{\found}{found}
\title[Specht module cohomology and integral designs]{Specht module cohomology and integral designs}
\begin{document}
\author{Ha Thu Nguyen} 
\address{DPMMS, CMS, Wilberforce Road, Cambridge, CB3 0WB, England UK.}
\email{H.T.Nguyen@dpmms.cam.ac.uk}
\thanks{Keywords:  symmetric groups, modular representation theory, integral design theory}
\thanks{Mathematics subject classification(2010): Primary 20C30; Secondary 20E48, 20G40}

\begin{abstract}
We aim to construct an element satisfying Hemmer's combinatorial criterion for $H^1(\mathfrak{S}_n, S^\lambda)$ to be non-vanishing. In the process, we discover an unexpected and surprising link between the combinatorial theory of integral designs and the representation theory of the symmetric groups.\end{abstract}
\maketitle
\section{Introduction}\label{intro}
It is well known that cohomology groups largely control the representation theory. However, these groups are very difficult to compute and very few are known explicitly.  For a Specht module $S^\lambda$ of the symmetric group $\mathfrak{S}_n$, where $\lambda$ is a partition of $n$, the cohomology $H^i(\mathfrak{S}_n,S^\lambda)$ is known only in degree $i = 0$. Further known results about first cohomology groups concern mostly Specht modules corresponding to hook partitions or two-part partitions (Weber~\cite{Weber}). However, the proofs for these involve powerful and complicated algebraic group machinery. Recently, David Hemmer~\cite{Hemmer12} proposed a method that allows (in principle, although it is difficult in practice) to check whether $H^1(\mathfrak{S}_n,S^\lambda)$ is trivial or not, only by means of combinatorics.  While Hemmer's criterion for non-zero $H^1(\mathfrak{S}_n, S^\lambda)$ provides some understanding of these groups, there is currently no effective method for determining when the criterion is fulfilled. Even for the cases where it is known that $H^1(\mathfrak{S}_n, S^\lambda)$ is non-zero, so that the criterion holds, proving directly that the criterion does indeed hold has been difficult. 

In this paper, we aim to construct an element $u$ of the permutation module $M^\lambda$ of the symmetric group $\mathfrak{S}_n$, that satisfies the necessary and sufficient conditions found by Hemmer~\cite{Hemmer12} for the cohomology group $H^1(\mathfrak{S}_n, S^\lambda)$ to be non-zero. In constructing this element for $2$-part partitions, we discover an unexpected and surprising link between the combinatorial theory of integral designs and the representation theory of symmetric groups. A similar, although much more complex, procedure should be sufficient to construct the desired element of Hemmer's criterion for partitions with three or more non-zero parts. There is much hope that this will lead to a classification of partitions labelling the Specht modules $S^\lambda$ such that $H^1(\mathfrak{S}_n,S^\lambda)$ is non-zero, and this is a possible source of future research.

The outline of the paper is as follows. In section $2$, we introduce Hemmer's criterion and collect some relevant known results that will be used throughout. In section $3$, we give a complementary argument to apply the criterion to do some computations and extend Theorems $5.8$ and $5.1$ in Hemmer~\cite{Hemmer12}. This argument highlights the fact that it is difficult to apply Hemmer's method in practice and one needs a novel approach to tackle first degree cohomology groups of Specht modules in a purely combinatorial way. One such approach is to use the theory of integral designs as shown in the second half of section $3$ and section $4$. Finally, we collect in the conclusion some general conjectures posed by Hemmer~\cite{Hemmer12} that could perhaps be attacked by our new approach.
 \section{Hemmer's criterion}
In this section, we describe Hemmer's criterion for non-zero $H^1(\mathfrak{S}_n, S^\lambda)$. Let us first introduce some notations and known results that will be used throughout.

A $\emph{partition}$ of $n$, denoted $\lambda \vdash n$, is a non-increasing string of non-negative integers $\lambda=(\lambda_1,\lambda_2, \dots, \lambda_r)$ summing to $n$. Write $[\lambda]$ for the $\emph{Young diagram}$ for $\lambda$ i.e $[\lambda] = \{ (i,j) \in \mathbb{N}^2\  | j \leq \lambda_i \}$. We adopt the convention of drawing the Young diagram by taking the $i$-axis to run from left to right on the page, and the $j$-axis from top to bottom, and placing a \text{\sffamily \large{x}} at each node. For example,  the partition $(5, 3, 1)$ has the following diagram:
\newcommand{\crossyoung}{\text{\sffamily \large{x}}}
\[
\gyoung(:\crossyoung:\crossyoung:\crossyoung:\crossyoung:\crossyoung,:\crossyoung:\crossyoung:\crossyoung,:\crossyoung)
\]
 A $\lambda$-\emph{tableau} is an assignment of $\{1,2, \dots, n \}$ to the nodes in $[\lambda]$. For example,
 \[
 \gyoung(:1:3:2:5:4,:8:7:9,:6)
 \]
 \noindent
 is a $(5, 3, 1 )$-tableau.

The symmetric group acts naturally on the set of $\lambda$-tableaux. For a tableau $t$ its row stabiliser $R_t$ is the subgroup of $\mathfrak{S}_n$ fixing the rows of $t$ setwise. Say $t$ and $s$ are row equivalent if $t = \pi s$ for some $\pi \in R_s$. An equivalence class is called a $\lambda$-\emph{tabloid}, and the class of $t$ is denoted by $\{ t \}$.  We shall depict $\{t\}$ by drawing lines between rows of t. For instance, 
 \[
 \youngtabloid(12345,678,9)
 \]
 \noindent
 is a $(5, 3, 1)$-tabloid.

  For $R$ a commutative ring, the \emph{permutation module $M^\lambda_R$} is the free $R$-module with basis the set of $\lambda$-tabloids. We drop the subscript $R$ when there is no confusion. If $\lambda=(\lambda_1,\lambda_2, \dots, \lambda_r)$, there is a corresponding \emph{Young subgroup} $\mathfrak{S}_\lambda \cong \mathfrak{S}_{\lambda_1} \times \mathfrak{S}_{\lambda_2} \times \dots \times \mathfrak{S}_{\lambda_r} \leq \mathfrak{S}_n$. The stabiliser of a $\lambda$-tabloid $\{ t \} $ is clearly a conjugate of $\mathfrak{S}_\lambda$ in $\mathfrak{S}_n$, and $\mathfrak{S}_n$ acts transitively on $\lambda$-tabloids, so we have
\[
M^\lambda_R \cong Ind_{\mathfrak{S}_\lambda}^{\mathfrak{S}_n}R.
\]
Since $M^\lambda$ is a transitive permutation module, it has a one-dimensional fixed-point space under the action of $\mathfrak{S}_n$. Let $f_\lambda \in M^\lambda$ denote the sum of all the $\lambda$-tabloids, so $f_\lambda$ spans this fixed subspace.

Note that the definition of $M^\lambda$ as the permutation module of $\mathfrak{S}_n$ on a Young subgroup does not require $\lambda_1 \geq \lambda_2 \geq \dots \geq \lambda_r$ so $M^\lambda$ is also defined for compositions $\lambda$ i.e.\ $\lambda=(\lambda_1, \lambda_2, \dots, \lambda_r)$ such that $\sum_{i=1}^r \lambda_i =n$.
\begin{Example}
\[
n=10, \lambda = (4, 5, 0, 1) \text{ and } M^{(4, 5, 0, 1}) \cong M^{(5, 4, 1)}.
\]
\end{Example}
The Specht module $S^{\lambda}$ is defined explicitly as the submodule of $M^{\lambda}$ spanned by certain linear combination of tabloids, called polytabloids. In characteristic zero the Specht modules $\{S^{\lambda} | \lambda \vdash n\}$ give a complete set of non-isomorphic simple $\mathfrak{S_n}$-modules. James~\cite{James78} gave an important alternative description of $S^\lambda$ inside $M^\lambda$ as the intersection of the kernels of certain homomorphisms from $M^\lambda$ to other permutation modules.

 Let $\lambda=(\lambda_1,\lambda_2, \dots, \lambda_r) \vdash n$, and let $\nu=(\lambda_1,\lambda_2, \dots, \lambda_{i-1}, \lambda_{i} + \lambda_{i+1} -v, v, \lambda_{i+2}, \dots )$.  Define the module homomorphism $\psi_{i,v} : M^\lambda \to M^\nu$ by 
\begin{align*}
\psi_{i,v}(\{t\}) =& \sum \{ \{t_1\} \ |\ \{t_1\} \ \text{agrees with}\  \{t\} \ \text{on all rows except rows}\ i \ \text{and}\ i+1,\\
&\text{and row} \ i+1 \ \text{of}\ \{t_1\} \ \text{is a subset of size}\ v \ \text{in row} \ i+1 \ \text{of} \ \{t\}\}.
\end{align*}

\begin{Theorem}[\textbf{Kernel Intersection Theorem}, James~\cite{James78}] \label{kernel}
Suppose $\lambda \vdash d$ has $r$ non-zero parts. Then
\[
S^\lambda = \bigcap_{i=1}^{r-1} \bigcap_{v=0}^{\lambda_i-1} \ker (\psi_{i,v}) \subseteq M^\lambda.
\]
\end{Theorem}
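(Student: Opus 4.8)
The plan is to prove the two inclusions $S^\lambda \subseteq D$ and $D \subseteq S^\lambda$ separately, where $D := \bigcap_{i,v}\ker(\psi_{i,v})$ denotes the right-hand side. Throughout, for a $\lambda$-tableau $t$ I write $C_t$ for its column stabiliser and $e_t = \sum_{\sigma \in C_t}\operatorname{sgn}(\sigma)\{\sigma t\}$ for the associated polytabloid, so that by definition $S^\lambda$ is the $R$-span of the $e_t$.

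For the inclusion $S^\lambda \subseteq D$, since each $\psi_{i,v}$ is a homomorphism and the $e_t$ generate $S^\lambda$, it suffices to show $\psi_{i,v}(e_t)=0$ for every tableau $t$ and every admissible pair $(i,v)$; here the key point is that $v$ is strictly smaller than the length $\lambda_{i+1}$ of the lower row being split. I would exploit the antisymmetry of $e_t$ under the transpositions $\tau_c \in C_t$ swapping the two entries that lie in rows $i$ and $i+1$ of a column $c$ meeting both rows; there are $\lambda_{i+1}$ such columns, and they generate an elementary abelian subgroup $K=\prod_c\langle\tau_c\rangle \le C_t$. Splitting the column antisymmetriser over the cosets of $K$ gives $e_t = \sum_{g}\operatorname{sgn}(g)\,g\cdot e'$, where $g$ runs over coset representatives and $e' = \sum_{\rho\in K}\operatorname{sgn}(\rho)\{\rho t\}$; by equivariance of $\psi_{i,v}$ it is then enough to prove $\psi_{i,v}(e')=0$. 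Now in any $\nu$-tabloid occurring in $\psi_{i,v}(e')$ the new row $i+1$ has only $v<\lambda_{i+1}$ entries, so by pigeonhole at least one of the $\lambda_{i+1}$ columns has both of its row-$i$ and row-$(i+1)$ entries sitting together in the enlarged row $i$. Toggling the corresponding $\tau_c$ then defines a sign-reversing involution on the indexing set $\{(\rho,B)\}$ (with $B$ the chosen $v$-subset) that fixes the output tabloid, so the terms cancel in pairs and $\psi_{i,v}(e')=0$.

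The reverse inclusion $D\subseteq S^\lambda$ is the harder half, and I would attack it by leading-term elimination. Equip the $\lambda$-tabloids with the dominance order $\trianglelefteq$ and recall the standard fact that for a row-standard tableau $t$ with strictly increasing columns the tabloid $\{t\}$ is the $\trianglelefteq$-maximal tabloid occurring in $e_t$, with coefficient $1$, and that distinct such $t$ give distinct maximal tabloids. Given $0\neq u\in D$, let $\{t\}$ be the $\trianglelefteq$-maximal tabloid appearing in $u$, with coefficient $c$. Assuming the detection lemma below, $t$ may be chosen with strictly increasing columns, so $u-c\,e_t\in D$ (because $e_t\in S^\lambda\subseteq D$) has a strictly smaller maximal tabloid. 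Since every leading coefficient involved is $\pm 1$, this elimination runs over an arbitrary coefficient ring $R$ and terminates, exhibiting $u$ as an $R$-combination of standard polytabloids; hence $u\in S^\lambda$.

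The main obstacle is exactly this detection lemma: if the $\trianglelefteq$-maximal tabloid of an element $u\in D$ is not column-strict when its rows are written in increasing order, then $u\notin D$. I would prove the contrapositive. A failure of column-strictness produces, for some index $i$, a column in which the sorted row-$i$ entry exceeds the entry directly below it in row $i+1$; the plan is to choose $v<\lambda_{i+1}$ so that the corresponding map $\psi_{i,v}$, applied to the leading tabloid $\{t\}$, produces a distinguished $\nu$-tabloid that is $\trianglelefteq$-maximal in $\psi_{i,v}(\{t\})$ and strictly dominates the contribution of every lower tabloid of $u$ (these being sent by $\psi_{i,v}$ to strictly $\trianglelefteq$-smaller $\nu$-tabloids). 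Verifying that this top image genuinely survives, by carefully tracking how $\psi_{i,v}$ interacts with dominance and isolating the surviving $\nu$-tabloid, is the delicate combinatorial core; once it is established, the two inclusions together yield $S^\lambda=\bigcap_{i,v}\ker(\psi_{i,v})$.
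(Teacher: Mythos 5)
The paper offers no proof of this theorem at all (it is quoted from James), so your proposal must be judged on its own merits. Two preliminary points. First, your reading of the inner range, $v<\lambda_{i+1}$, is the correct one: as displayed in the paper the range runs to $\lambda_i-1$, which whenever $\lambda_i>\lambda_{i+1}$ would include $\psi_{i,\lambda_{i+1}}$; that map is the identity on $M^\lambda$, so the displayed intersection would be $0$ --- a typo you have implicitly corrected. Second, your first inclusion $S^\lambda\subseteq\bigcap\ker(\psi_{i,v})$ is correct and complete: factoring the column antisymmetrizer through the elementary abelian group $K$ generated by the $\lambda_{i+1}$ transpositions $\tau_c$, and pairing an index $(\rho,B)$ with $(\tau_{c_0}\rho,B)$, where $c_0$ is the least column both of whose entries avoid the retained $v$-set $B$ (such a column exists exactly because $v<\lambda_{i+1}$), is a fixed-point-free, sign-reversing, image-preserving involution, so $\psi_{i,v}(e')=0$. (In your elimination step there is also a small slip --- a dominance-maximal tabloid of $u$ need not be unique --- but that is repaired by taking $\{t\}$ maximal for any total order refining $\trianglelefteq$.)

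The genuine gap is the ``detection lemma,'' and the problem is not only that you leave it unproved: the route you sketch for it breaks down. You want one map $\psi_{i,v}$ and one distinguished $\nu$-tabloid $T$ occurring in $\psi_{i,v}(\{t\})$ such that every other tabloid of $u$ contributes to $\psi_{i,v}(u)$ only $\nu$-tabloids strictly below $T$, so that the coefficient of $T$ in $\psi_{i,v}(u)$ is the leading coefficient $c\neq 0$. But $\psi_{i,v}$ does not interact with dominance in this way: a $\lambda$-tabloid strictly dominated by $\{t\}$ can hit $T$ exactly. Take $\lambda=(2,2)$ and, in the paper's bar notation, $\{t\}=\overline{2\,3}$ (rows $\{1,4\}$ and $\{2,3\}$; its row-sorted filling has $4$ above $3$, so it is not column-strict, and the lemma must rule it out as a maximal tabloid of any $u$ in the kernel intersection). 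Here $\psi_{1,1}(\overline{x\,y})=\overline{x}+\overline{y}$, so the dominance-maximal image of $\{t\}$ is $T=\overline{3}$; but $\overline{1\,3}\triangleleft\overline{2\,3}$ also maps onto $\overline{3}$. The other image $\overline{2}$ is likewise hit by $\overline{1\,2}\triangleleft\overline{2\,3}$, and $\psi_{1,0}$ identifies all tabloids; so for this shape no choice of $(v,T)$ isolates $c$. The lemma is still true, but one must play several kernel equations off against each other rather than read off a single coefficient: writing $u=c\,\overline{2\,3}+\alpha\,\overline{1\,4}+\beta\,\overline{1\,3}+\gamma\,\overline{1\,2}$ (tabloids dominating $\overline{2\,3}$ being excluded by maximality), the coefficient of $\overline{1}$ in $\psi_{1,1}(u)=0$ gives $\alpha+\beta+\gamma=0$, while $\psi_{1,0}(u)=0$ gives $c+\alpha+\beta+\gamma=0$, whence $c=0$ over any coefficient ring. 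Producing such a scheme for arbitrary $\lambda$ and arbitrary column defects is the real content of the hard inclusion --- it is why James's proof is long --- and it is precisely what your proposal does not supply; as it stands you have established only the containment $S^\lambda\subseteq\bigcap\ker(\psi_{i,v})$.
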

So given a linear combination of tabloids $u \in M^\lambda$, this gives an explicit test for whether $u\in S^\lambda$. 
\begin{Theorem}[James~\cite{James78}]\label{H0}
Given $t \in \mathbb{Z}$, let $l_p(t)$ be the smallest non-negative integer satisfying $ t < p^{l_p(t)}$. The invariants $Hom_{k\mathfrak{S}_d}(k,S^\lambda)=H^0(\mathfrak{S}_d,S^\lambda)=0$ unless $\lambda_i \equiv -1 \pmod {p^{l_p(\lambda_{i+1})}}$ for all $i$ such that $\lambda_{i+1} \neq 0$, in which case it is one-dimensional.
\end{Theorem}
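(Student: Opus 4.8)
The plan is to identify $H^0(\mathfrak{S}_d, S^\lambda) = \mathrm{Hom}_{k\mathfrak{S}_d}(k, S^\lambda)$ with the space of $\mathfrak{S}_d$-fixed vectors of $S^\lambda$ and to exploit that $S^\lambda \subseteq M^\lambda$. Since taking fixed points commutes with inclusion of submodules, the invariant space is exactly $S^\lambda \cap (M^\lambda)^{\mathfrak{S}_d}$. As noted above, $(M^\lambda)^{\mathfrak{S}_d}$ is one-dimensional, spanned by $f_\lambda$. Hence $H^0(\mathfrak{S}_d, S^\lambda)$ is either $0$ or the line $\langle f_\lambda \rangle$, and it is nonzero (and then automatically one-dimensional) precisely when $f_\lambda \in S^\lambda$. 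This already disposes of the ``one-dimensional in which case'' clause, and reduces everything to deciding when $f_\lambda$ lies in $S^\lambda$.

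For this I would invoke the Kernel Intersection Theorem (Theorem~\ref{kernel}): $f_\lambda \in S^\lambda$ if and only if $\psi_{i,v}(f_\lambda)=0$ for every map $\psi_{i,v}$ occurring there, that is for $1 \le i \le r-1$ and $0 \le v \le \lambda_{i+1}-1$. The crux is to evaluate $\psi_{i,v}$ on the all-tabloids vector $f_\lambda$. Fixing a target $\nu$-tabloid $\{s\}$ and counting the pairs consisting of a $\lambda$-tabloid $\{t\}$ and a subset-choice that produce $\{s\}$, one sees the size-$v$ row $i{+}1$ of $\{s\}$ must be completed to a size-$\lambda_{i+1}$ set inside the combined content of rows $i$ and $i{+}1$; this can be done in $\binom{\lambda_i + \lambda_{i+1}-v}{\lambda_{i+1}-v}$ ways, a number independent of $\{s\}$. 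Therefore
\[
\psi_{i,v}(f_\lambda) = \binom{\lambda_i + \lambda_{i+1}-v}{\lambda_{i+1}-v}\, f_\nu,
\]
and as $f_\nu$ is a nonzero element of $M^\nu$ over a field of characteristic $p$, we obtain $\psi_{i,v}(f_\lambda)=0$ iff $p \mid \binom{\lambda_i + \lambda_{i+1}-v}{\lambda_{i+1}-v}$. When $\lambda_{i+1}=0$ no map $\psi_{i,v}$ appears, which is exactly why the statement imposes a condition only at those $i$ with $\lambda_{i+1}\neq 0$.

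It then remains to translate this divisibility into the stated congruence. Writing $a=\lambda_i$, $b=\lambda_{i+1}$ and substituting $w = b-v$, the requirement is $p \mid \binom{a+w}{w}$ for all $1 \le w \le b$. I would finish with Kummer's theorem: $p \mid \binom{a+w}{w}$ exactly when adding $a$ and $w$ in base $p$ produces at least one carry. If the base-$p$ digits of $a$ in positions $0,\dots,l_p(b)-1$ are all equal to $p-1$, equivalently $a \equiv -1 \pmod{p^{l_p(b)}}$, then any $w$ with $1 \le w \le b < p^{l_p(b)}$ forces a carry at its lowest nonzero digit. Conversely, if some such digit of $a$ is not $p-1$, taking $w=p^{j}$ for the least offending position $j$ (which satisfies $p^{j} \le p^{l_p(b)-1} \le b$) yields no carry, so $\binom{a+w}{w}$ is a unit mod $p$. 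This identifies $\lambda_i \equiv -1 \pmod{p^{l_p(\lambda_{i+1})}}$ as the precise criterion.

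I expect the main obstacle to be the combinatorial preimage count underlying the displayed formula, namely checking both that the multiplicity is $\binom{\lambda_i+\lambda_{i+1}-v}{\lambda_{i+1}-v}$ and that it is genuinely uniform over all target tabloids, together with the base-$p$ digit bookkeeping in the Kummer step; by contrast the reduction to $f_\lambda \in S^\lambda$ and the one-dimensionality are essentially formal.
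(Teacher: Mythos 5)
Your proof is correct, but there is an important contextual point: the paper gives no proof of this statement at all --- it is quoted as a known theorem of James with a citation --- so there is no internal argument to compare against. What you have written is essentially James' own classical proof, assembled from exactly the tools the paper collects: the reduction of $H^0(\mathfrak{S}_d,S^\lambda)$ to the question of whether $f_\lambda\in S^\lambda$ (using that $(M^\lambda)^{\mathfrak{S}_d}$ is spanned by $f_\lambda$ and that fixed points of a submodule are the intersection with the ambient fixed space), the coefficient computation $\psi_{i,v}(f_\lambda)=\binom{\lambda_i+\lambda_{i+1}-v}{\lambda_{i+1}-v}f_\nu$ via the Kernel Intersection Theorem (Theorem~\ref{kernel}), and the translation of the resulting divisibility requirements into the congruence $\lambda_i\equiv-1\pmod{p^{l_p(\lambda_{i+1})}}$. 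Your uniform-multiplicity count and your carry analysis are both sound. The one place where you diverge from the paper's toolkit is the final step: you argue from scratch with Kummer's theorem (Lemma~\ref{lemkum}), whereas Corollary~\ref{cor225} (James' Corollary 22.5), which the paper quotes precisely for this purpose, already states that all of $\binom{a}{b},\binom{a-1}{b-1},\dots,\binom{a-b+1}{1}$ are divisible by $p$ if and only if $a-b\equiv-1\pmod{p^{l_p(b)}}$; taking $a=\lambda_i+\lambda_{i+1}$ and $b=\lambda_{i+1}$ this is exactly your list $\binom{\lambda_i+w}{w}$, $1\le w\le\lambda_{i+1}$, so the base-$p$ bookkeeping you flag as the main obstacle can be outsourced in one line. (Also, the paper's statement of Theorem~\ref{kernel} lets $v$ run up to $\lambda_i-1$ rather than $\lambda_{i+1}-1$; since $\psi_{i,v}$ is identically zero for $v\ge\lambda_{i+1}$, your use of the smaller range is equivalent and is the standard formulation.) Both routes are fine: Kummer makes the argument self-contained, while the corollary gives the shortest finish.
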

Generalizing James' work, Hemmer proposed
\begin{Theorem}[Hemmer~\cite{Hemmer12}]\label{ext} Let $p>2$ and $\lambda \vdash d$. Then $Ext^1(k, S^\lambda) \neq 0$ if and only if there exists $u \in M^\lambda$ with the following properties:

(i) For each $\psi_{i,v}: M^\lambda \to M^\nu$ appearing in Theorem~\ref{kernel}, $\psi_{i,v}(u)$ is a multiple of $f_\nu$, at least one of which is a non-zero multiple.  

(ii) There does not exist a scalar $a \neq 0$ such that all the $\psi_{i,v}(af_\lambda-u)$ are zero.

\noindent
If so then the subspace spanned by $S^\lambda$ and $u$ is a submodule that is a non-split extension of $k$ by $S^\lambda$.
\end{Theorem}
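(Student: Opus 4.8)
The plan is to realise $H^1(\mathfrak{S}_d, S^\lambda) = \mathrm{Ext}^1(k, S^\lambda)$ as a cokernel, using the long exact sequence attached to the Kernel Intersection Theorem. Write $\Psi = \bigoplus_{i,v}\psi_{i,v}\colon M^\lambda \to N$, where $N = \bigoplus_{i,v} M^{\nu_{i,v}}$ is the direct sum of the targets over all pairs $(i,v)$ occurring in Theorem~\ref{kernel}. By that theorem $\ker\Psi = S^\lambda$, so with $Q = \mathrm{im}\,\Psi \subseteq N$ we obtain a short exact sequence of $k\mathfrak{S}_d$-modules
\[
0 \longrightarrow S^\lambda \longrightarrow M^\lambda \stackrel{\Psi}{\longrightarrow} Q \longrightarrow 0 .
\]
Applying $H^*(\mathfrak{S}_d, -) = \mathrm{Ext}^*(k, -)$ yields the exact sequence
\[
0 \to H^0(S^\lambda) \to H^0(M^\lambda) \stackrel{\Psi_*}{\longrightarrow} H^0(Q) \stackrel{\delta}{\longrightarrow} H^1(S^\lambda) \to H^1(M^\lambda) \to \cdots .
\]

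The crucial input is that $H^1(\mathfrak{S}_d, M^\lambda) = 0$ whenever $p > 2$; this is exactly where the hypothesis on $p$ is used. Since $M^\lambda \cong \mathrm{Ind}_{\mathfrak{S}_\lambda}^{\mathfrak{S}_d} k$, Shapiro's lemma gives $H^1(\mathfrak{S}_d, M^\lambda) \cong H^1(\mathfrak{S}_\lambda, k) = \mathrm{Hom}(\mathfrak{S}_\lambda, (k,+))$. As $\mathfrak{S}_\lambda$ is a direct product of symmetric groups, its abelianisation is an elementary abelian $2$-group, so this Hom-group vanishes once $p > 2$. With this vanishing the sequence collapses to an isomorphism $H^1(S^\lambda) \cong \mathrm{coker}\bigl(\Psi_*\colon H^0(M^\lambda) \to H^0(Q)\bigr)$, and the connecting map $\delta$ carries a fixed element $\Psi(u) \in H^0(Q)$ to the class of the corresponding pullback extension.

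It then remains to read off the two spaces. Since $M^\lambda$ is a transitive permutation module, $H^0(M^\lambda) = (M^\lambda)^{\mathfrak{S}_d} = \langle f_\lambda\rangle$, and $\Psi_*$ is the map $a f_\lambda \mapsto a\,\Psi(f_\lambda)$, with image $\langle \Psi(f_\lambda)\rangle$. For the submodule $Q \subseteq N$ we have $H^0(Q) = Q \cap N^{\mathfrak{S}_d}$, while $N^{\mathfrak{S}_d} = \bigoplus_{i,v}\langle f_{\nu_{i,v}}\rangle$; hence $H^0(Q)$ consists exactly of the elements $\Psi(u)$ with $u \in M^\lambda$ for which every component $\psi_{i,v}(u)$ is a scalar multiple of $f_{\nu_{i,v}}$. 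Therefore $H^1(S^\lambda) \neq 0$ if and only if some such $\Psi(u)$ lies outside $\langle\Psi(f_\lambda)\rangle$. Unwinding the condition $\Psi(u) \notin \langle \Psi(f_\lambda)\rangle$: the case $a=0$ forces $\Psi(u) \neq 0$, i.e.\ at least one $\psi_{i,v}(u)$ is a non-zero multiple of $f_{\nu_{i,v}}$, which is precisely condition (i); the cases $a \neq 0$ give $\Psi(af_\lambda - u) \neq 0$, i.e.\ not all $\psi_{i,v}(af_\lambda - u)$ vanish, which is precisely condition (ii). The ``at least one non-zero multiple'' clause is exactly what rules out the degenerate $a=0$ possibility, so the two conditions together say that $\Psi(u)$ represents a non-zero class in the cokernel.

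For the final assertion, chase $\delta$: the class $\delta(\Psi(u))$ is represented by the pullback $E = \Psi^{-1}(\langle\Psi(u)\rangle) = S^\lambda + ku$, which is a $k\mathfrak{S}_d$-submodule of $M^\lambda$ because it is the preimage of a submodule of $Q$; since $\Psi(u) \neq 0$ we get $E/S^\lambda \cong \langle\Psi(u)\rangle \cong k$, and $E$ is non-split precisely when $\delta(\Psi(u)) \neq 0$, i.e.\ when (i) and (ii) hold. I expect the one genuinely substantive point to be the vanishing $H^1(\mathfrak{S}_d, M^\lambda) = 0$, which is where $p > 2$ enters and without which the cokernel description of $H^1(S^\lambda)$ fails. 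The remaining obstacle is organisational rather than deep: matching the cohomological cokernel condition to the exact phrasing of (i) and (ii)---handling the scalar $a = 0$ through the non-zero-multiple clause of (i) and the scalars $a \neq 0$ through (ii)---and justifying the elementary identity $H^0(Q) = Q \cap N^{\mathfrak{S}_d}$.
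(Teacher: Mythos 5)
The paper never proves this statement---it is quoted as a known result from Hemmer~\cite{Hemmer12}---so the only meaningful comparison is with Hemmer's original argument, which your proof essentially reconstructs: the short exact sequence coming from the Kernel Intersection Theorem (you use $Q=\mathrm{im}\,\Psi$ in place of the isomorphic quotient $M^\lambda/S^\lambda$), the vanishing $H^1(\mathfrak{S}_d,M^\lambda)=0$ for $p>2$ via Shapiro's lemma and the fact that the abelianisation of a Young subgroup is an elementary abelian $2$-group, and the resulting identification of $H^1(\mathfrak{S}_d,S^\lambda)$ with the cokernel of $(M^\lambda)^{\mathfrak{S}_d}\to Q^{\mathfrak{S}_d}$. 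Your translation of the cokernel condition into (i) (the scalar $a=0$) and (ii) (the scalars $a\neq 0$), and your identification of $S^\lambda+ku=\Psi^{-1}\bigl(\langle\Psi(u)\rangle\bigr)$ with the pullback extension whose class is $\delta(\Psi(u))$, are both correct, so the proof is complete and matches the standard route.
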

\begin{Remark}
We can replace $M^\lambda$ by the Young module $Y^\lambda$ --- the unique indecomposable direct summand of $M^\lambda$ containing the Specht module $S^\lambda$. 
\end{Remark}

In the same paper, Hemmer~\cite{Hemmer12} proposed the following problem:
\begin{Problem}\label{non-vanishing hom}
It is known by an argument of Andersen (Proposition $5.2.4$~\cite{Hem09}) that for $\lambda \neq (d)$, if $H^0(\mathfrak{S}_d,S^\lambda) \neq 0$ then $H^1(\mathfrak{S}_d, S^\lambda) \neq 0$. For each such $\lambda$ (given by Theorem~\ref{H0}) construct an element $u \in M^\lambda$ as in Theorem~\ref{ext}.
\end{Problem} 
\begin{Remark}
We note that for $\lambda$ such that $H^0(\mathfrak{S}_d,S^\lambda) \neq 0$, condition (ii) in \textbf{Theorem~\ref{ext}} is implied by condition (i). Hence, one natural approach to \textbf{Problem~\ref{non-vanishing hom}} is
\end{Remark}
\begin{planA}
Find a $u \in M^\lambda$ such that all the $\psi_{i,v}(u)$ vanish except for precisely one value of $(i, v)$. 
\end{planA}

Finally, many of the problems involving Specht modules that arise depend upon whether or not the prime characteristic $p$ divides certain binomial coefficients. We collect several relevant lemmas below.

\begin{Lemma}[James~\cite{James78}, Lemma 22.4]\label{lem224}
Assume that 

$a = a_0 + a_1p + \dots + a_rp^r (0 \leq a_i < p),$

$b = b_0 + b_1p + \dots + b_rp^r  (0 \leq b_i <p).$

\noindent
Then $\binom{a}{b} \equiv \binom{a_0}{b_0}\binom{a_1}{b_1}\dots \binom{a_r}{b_r}$ $\pmod{p}$. In particular, $p$ divides $\binom{a}{b}$ if and only if $a_i < b_i$ for some $i$.
\end{Lemma}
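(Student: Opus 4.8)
The plan is to prove the congruence by a generating-function argument in the polynomial ring $\mathbb{F}_p[x]$, exploiting the Frobenius endomorphism. The starting point is the \emph{freshman's dream}: since $p \mid \binom{p}{j}$ for $0 < j < p$, we have $(1+x)^p \equiv 1 + x^p \pmod p$, and iterating gives $(1+x)^{p^i} \equiv 1 + x^{p^i} \pmod p$ for every $i \geq 0$. This reduces the problem of understanding $\binom{a}{b} \bmod p$, which is the coefficient of $x^b$ in $(1+x)^a$, to a purely combinatorial bookkeeping of exponents in base $p$.

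First I would write $a = \sum_{i=0}^r a_i p^i$ and factor
\[
(1+x)^a = \prod_{i=0}^r \left((1+x)^{p^i}\right)^{a_i} \equiv \prod_{i=0}^r (1+x^{p^i})^{a_i} \pmod p.
\]
Expanding each factor by the binomial theorem gives $(1+x^{p^i})^{a_i} = \sum_{c_i = 0}^{a_i} \binom{a_i}{c_i} x^{c_i p^i}$, so the coefficient of $x^b$ in the product is $\sum \prod_{i=0}^r \binom{a_i}{c_i}$, the sum being over all tuples $(c_0, \dots, c_r)$ with $0 \leq c_i \leq a_i$ and $\sum_i c_i p^i = b$. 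The crucial point, and the step I expect to carry the real content, is that since each $c_i$ satisfies $0 \leq c_i \leq a_i < p$, the equation $\sum_i c_i p^i = b$ forces $c_i = b_i$ for all $i$ by uniqueness of the base-$p$ expansion of $b$. Hence exactly one tuple contributes, and comparing the coefficient of $x^b$ on both sides yields $\binom{a}{b} \equiv \prod_{i=0}^r \binom{a_i}{b_i} \pmod p$.

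For the final assertion, I would observe that for each $i$ the integer $\binom{a_i}{b_i}$ is either zero --- precisely when $b_i > a_i$, since then the ordinary binomial coefficient vanishes --- or else a product of integers all strictly less than $p$, hence coprime to $p$. Consequently the product $\prod_i \binom{a_i}{b_i}$ is nonzero modulo $p$ exactly when $b_i \leq a_i$ for every $i$, which is the stated criterion: $p \mid \binom{a}{b}$ if and only if $a_i < b_i$ for some $i$.

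The main obstacle is the exponent-matching step: one must be sure that no carrying occurs, i.e.\ that the constraints $c_i \leq a_i < p$ genuinely pin down each digit uniquely. Were some $c_i$ allowed to reach $p$ or beyond, several tuples could sum to the same $b$ and the clean product formula would fail; the bound $a_i < p$ built into the hypothesis is exactly what rules this out.
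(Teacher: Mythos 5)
Your proof is correct: it is the classical generating-function proof of Lucas' theorem, factoring $(1+x)^a \equiv \prod_i (1+x^{p^i})^{a_i} \pmod p$ via the Frobenius identity and comparing coefficients of $x^b$ using uniqueness of base-$p$ digits. The paper states this lemma without proof, citing James's book, and the proof given there is essentially this same argument, so your approach matches the source.
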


\begin{Corollary}[James~\cite{James78}, Corollary 22.5]\label{cor225}
Assume $a \geq b \geq 1$. Then all the binomial coefficients $\binom{a}{b}$, $\binom{a-1}{b-1}, \dots, \binom{a-b+1}{1}$ are divisible by $p$ if and only if
\[
a-b \equiv  (-1)\pmod{p^{l_p(b)}}.
\]
\end{Corollary}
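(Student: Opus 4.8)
The plan is to reduce the statement to a clean question about base-$p$ carries and then apply the Lucas-type congruence of Lemma~\ref{lem224}. First I would reindex: writing $c = a - b \ge 0$ and $m = b - j$, the list $\binom{a}{b}, \binom{a-1}{b-1}, \dots, \binom{a-b+1}{1}$ becomes exactly $\{\binom{c+m}{m} : 1 \le m \le b\}$, since $a - j = c + m$ and $b - j = m$. Setting $L = l_p(b)$, the defining property of $l_p$ gives $p^{L-1} \le b < p^L$, and the congruence $a - b \equiv -1 \pmod{p^{L}}$ is equivalent to saying that the $L$ lowest base-$p$ digits of $c$ are all equal to $p-1$. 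So the corollary becomes: every $\binom{c+m}{m}$ with $1 \le m \le b$ is divisible by $p$ if and only if $c$ has bottom digits $c_0 = c_1 = \dots = c_{L-1} = p-1$.

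For the \emph{if} direction, assume the bottom $L$ digits of $c$ equal $p-1$ and fix $m$ with $1 \le m \le b < p^L$. Let $i_0$ be the position of the lowest nonzero base-$p$ digit of $m$, so $i_0 \le L-1$ and $m_{i_0} \ge 1$. Adding $m$ to $c$ in base $p$, the positions below $i_0$ contribute digit $p-1$ (from $c$) with no carry, while at position $i_0$ the sum $(p-1) + m_{i_0} \ge p$ forces the $i_0$-th digit of $c+m$ to be $m_{i_0} - 1$. Since $m_{i_0} - 1 < m_{i_0}$, Lemma~\ref{lem224} immediately yields $p \mid \binom{c+m}{m}$.

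For the \emph{only if} direction I would argue by contraposition: suppose some bottom digit $c_j$ with $0 \le j \le L-1$ satisfies $c_j \le p - 2$. Then I would exhibit a single $m$ in range that fails the divisibility. Take $m = p^j$; since $j \le L-1$ we have $1 \le m = p^j \le p^{L-1} \le b$, so $m$ is one of the allowed values. Adding $p^j$ to $c$ only increases the $j$-th digit from $c_j$ to $c_j + 1 \le p-1$, producing no carry, so every base-$p$ digit of $c + m$ is at least the corresponding digit of $m$; Lemma~\ref{lem224} then gives $p \nmid \binom{c+m}{m}$. Hence not all the binomial coefficients are divisible by $p$, completing the contrapositive.

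The routine verifications are the digit and carry bookkeeping, which I expect to be the main source of fiddliness rather than genuine difficulty; the one genuinely load-bearing point is that the witness $m = p^j$ chosen in the \emph{only if} direction actually lies in the range $1 \le m \le b$, and this is exactly where the lower bound $p^{L-1} \le b$ coming from the definition of $l_p(b)$ is used. Keeping track of why no carry propagates in each case (and in particular that the lowest nonzero digit of $m$ is what triggers the single carry in the \emph{if} direction) is the step I would be most careful to get right.
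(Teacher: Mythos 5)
Your proof is correct. The paper does not actually prove this statement --- it is imported verbatim from James's book as a known result --- and your argument is exactly the standard derivation from the Lucas-type congruence (Lemma~\ref{lem224}): both directions of your digit/carry analysis check out, and you correctly identify the one load-bearing point, namely that $p^{L-1} \le b$ (from the definition of $l_p(b)$) guarantees the witness $m = p^j$ lies in the admissible range.
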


\begin{Lemma}[Kummer~\cite{Kummer}, p.116]\label{lemkum}
The highest power of a prime $p$ that divides the binomial coefficient $\binom{x+y}{x}$ is equal to the number of carries that occur when the integers $x$ and $y$ are added in $p$-ary notation.
\end{Lemma}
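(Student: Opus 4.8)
The plan is to reduce everything to the classical formula of Legendre for the $p$-adic valuation of a factorial and then to match the resulting digit-sum expression against the carry count. Write $v_p(m)$ for the exponent to which $p$ occurs in $m$, and let $s_p(m)$ be the sum of the base-$p$ digits of $m$. The starting point is Legendre's formula
\[
v_p(m!) = \sum_{i \ge 1} \left\lfloor \frac{m}{p^i} \right\rfloor = \frac{m - s_p(m)}{p-1},
\]
which is proved by counting, for each $i$, how many of $1, 2, \dots, m$ are divisible by $p^i$.

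First I would expand the valuation of the binomial coefficient as a difference of factorial valuations and collect the terms by power of $p$:
\[
v_p\!\binom{x+y}{x} = v_p\bigl((x+y)!\bigr) - v_p(x!) - v_p(y!) = \sum_{i \ge 1}\left( \left\lfloor \tfrac{x+y}{p^i}\right\rfloor - \left\lfloor \tfrac{x}{p^i}\right\rfloor - \left\lfloor \tfrac{y}{p^i}\right\rfloor\right).
\]
Thus the total valuation is a sum of contributions indexed by the powers $p^i$, and it suffices to show that the $i$-th summand is exactly the indicator of a carry into the $i$-th base-$p$ position.

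The combinatorial heart is this per-index identification. Fixing $i$, write $x = p^i q_x + r_x$ and $y = p^i q_y + r_y$ with $0 \le r_x, r_y < p^i$; then the $i$-th summand collapses to $\lfloor (r_x + r_y)/p^i \rfloor$, which lies in $\{0,1\}$ since $0 \le r_x + r_y < 2p^i$, and equals $1$ precisely when $r_x + r_y \ge p^i$. This inequality says exactly that adding the bottom $i$ digits of $x$ and $y$ overflows, i.e.\ that a carry is produced out of position $i-1$ into position $i$. Summing the indicator over all $i \ge 1$ therefore counts the carries in the base-$p$ addition of $x$ and $y$, so $v_p\binom{x+y}{x}$ equals the number of carries, as claimed.

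I expect the only genuine subtlety to lie in this last step: verifying that the nonzero summands stand in bijection with the carries, with no double counting and no boundary term lost as $i$ runs over all positive integers. If one prefers to avoid the per-index case analysis, an equivalent route is to establish the global identity $s_p(x) + s_p(y) - s_p(x+y) = (p-1)c$, where $c$ is the number of carries, by summing the digitwise relation $a_j + b_j + c_{j-1} = d_j + p\,c_j$ over all positions $j$; substituting this into Legendre's formula then gives $v_p\binom{x+y}{x} = c$ at once.
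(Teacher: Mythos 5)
Your proposal is correct, but note that the paper itself offers no proof of this statement: it is quoted as a classical result of Kummer (with only a page citation), so there is no in-paper argument to compare against. What you have written is the standard modern derivation via Legendre's formula, and both of your routes are sound. The per-index route is complete once one justifies that a carry is produced out of position $i-1$ exactly when $(x \bmod p^i) + (y \bmod p^i) \ge p^i$; this is the small induction on $i$ that you correctly flag as the only subtlety (the carry out of position $i$ is $1$ iff $a_i + b_i + c_{i-1} \ge p$, and the inductive hypothesis converts this to the stated inequality for $i+1$). Your second, global route — summing the digitwise relation $a_j + b_j + c_{j-1} = d_j + p\,c_j$ to get $s_p(x) + s_p(y) - s_p(x+y) = (p-1)c$ and substituting into Legendre's formula — avoids that case analysis entirely and is fully rigorous as written; if you had to choose one to write up, that is the cleaner choice. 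Either version would serve as a self-contained replacement for the citation, which is more than the paper provides.
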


\section{New results and introduction to design theory}
\subsection{Two-part partitions and integral designs.}
In this section, we specialize to $2$-part partitions. In this case, $\lambda$-tabloids are determined by the second row. 

\begin{Example} We can represent 
 \[
 \youngtabloid(12345,678)
 \]
simply by $\overline{6\ 7 \ 8}$.
\end{Example}

In~\cite{Hemmer12}, Hemmer followed \textbf{Plan A} to find explicit u's for $\lambda = (p^a, p^a)$ and $\lambda = (p^b-1,p^a)$ for $b >a$.  We will give a complementary argument to shed light on why this natural approach is particularly suitable for the values of $\lambda$ chosen by Hemmer: they belong to a subclass of the class of $2$-part partitions where the second part is a $p$-power. Our argument also extends Theorems $5.8$ and $5.11$ in Hemmer~\cite{Hemmer12}.

\begin{Theorem}\label{2-p-power}
Let $k$ have characteristic $p \geq 3$. Then 

(1) $H^1(\mathfrak{S}_d, S^{(rp^n,\  p^n)}) \neq 0$ for any $n\geq 1$, $r \geq1$, $p \nmid (r+1)$ and $p \nmid r$.

(2) $H^1(\mathfrak{S}_d, S^{(a,\  p^n)}) \neq 0$ for any $a \geq p^n$ and $a\equiv (-1) \pmod{p^{n+1}}$.
\end{Theorem}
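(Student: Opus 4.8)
The plan is to translate Hemmer's criterion (Theorem~\ref{ext}) for a two-part partition $\lambda=(\lambda_1,p^n)\vdash d$ into a statement about designs and then to realise that statement by an explicit construction. Since a two-part tabloid is determined by its second row, I write a general element of $M^\lambda$ as $u=\sum_{|B|=p^n}c_B\,\overline{B}$, the sum running over the $p^n$-subsets $B$ of $\{1,\dots,d\}$ with coefficients $c_B\in k$. By the Kernel Intersection Theorem (Theorem~\ref{kernel}) the only maps to be tested are $\psi_{1,v}\colon M^\lambda\to M^{(d-v,v)}$ for $0\le v\le p^n-1$, and unwinding the definition gives $\psi_{1,v}(u)=\sum_{|C|=v}\bigl(\sum_{B\supseteq C}c_B\bigr)\overline{C}$. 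Thus $\psi_{1,v}(u)$ is a multiple $\mu_v f_{(d-v,v)}$ of $f_{(d-v,v)}$ precisely when the covering number $\sum_{B\supseteq C}c_B$ takes a constant value $\mu_v$ as $C$ runs over the $v$-subsets --- that is, precisely when $(c_B)$ is an integral design whose covering numbers are read modulo $p$. Condition (i) of Theorem~\ref{ext} therefore becomes: $(c_B)$ is such a $(p^n-1)$-design with index vector $(\mu_0,\dots,\mu_{p^n-1})\not\equiv 0\pmod p$. Establishing this dictionary between Hemmer's criterion and design theory is the first thing I would do.

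Next I would extract the arithmetic that constrains the index vector and restate condition (ii). Counting the flags over a $v$-subset gives the congruence $(d-v)\,\mu_{v+1}\equiv(p^n-v)\,\mu_v\pmod p$, and combined with Lemma~\ref{lemkum} and Lemma~\ref{lem224} this determines, from the residues of $v$ and of $d$ modulo $p$, exactly where $(\mu_v)$ may jump between zero and non-zero. For condition (ii), replacing $u$ by $af_\lambda-u$ turns $\mu_v$ into $a\binom{d-v}{p^n-v}-\mu_v$, so condition (ii) holds if and only if $(\mu_v)_v$ is not a non-zero scalar multiple of the covering vector $\bigl(\binom{d-v}{p^n-v}\bigr)_v$ of the complete design. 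In particular, when $H^0(\mathfrak{S}_d,S^\lambda)\neq 0$ --- which by Theorem~\ref{H0} is exactly the case $\lambda_1\equiv-1\pmod{p^{n+1}}$ of part (2) --- Theorem~\ref{H0} forces every $\binom{d-v}{p^n-v}$ to vanish modulo $p$, so the complete covering vector is zero and condition (ii) reduces to condition (i), as predicted by the Remark after Problem~\ref{non-vanishing hom}.

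With the dictionary set up, in both parts I would aim for the single-index pattern of \textbf{Plan A}: a design $(c_B)$ with non-zero total $\sum_B c_B$ but with every covering number at the levels $1\le v\le p^n-1$ vanishing modulo $p$, so that only $\psi_{1,0}(u)$ survives. That this is feasible at all rests on the $p$-power block size, through the identity $\binom{p^n}{v}\equiv 0\pmod p$ for $0<v<p^n$, which decouples the total from the intermediate covering numbers and explains why Plan A suits partitions whose second part is a power of $p$. For part (2) we have $d\equiv-1\pmod p$, whence the congruence reads $(v+1)\mu_{v+1}\equiv v\mu_v$, so $v\mu_v$ is constant and $\mu_v\equiv 0$ whenever $p\nmid v$; the pattern is admissible, condition (ii) is free, and I would verify the vanishing of the higher covering numbers from the cascade of carries forced by $\lambda_1\equiv-1\pmod{p^{n+1}}$ via Corollary~\ref{cor225} and Lemma~\ref{lemkum}. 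For part (1) we have $d=(r+1)p^n\equiv 0\pmod p$, so the congruence degenerates to $v\mu_{v+1}\equiv v\mu_v$ (vacuous when $p\mid v$) and $H^0=0$; here the complete covering vector has a unit entry at the levels $v\ge 1$, so the pattern $(\mu_0,0,\dots,0)$ with $\mu_0\neq 0$ is manifestly not proportional to it and condition (ii) comes for free once the design exists. The hypotheses $p\nmid r$ and $p\nmid(r+1)$ are exactly the arithmetic conditions, read off from Lemma~\ref{lem224} and Lemma~\ref{lemkum}, under which this design on $p^n$-subsets of an $(r+1)p^n$-set can be produced.

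The step I expect to be the main obstacle is the existence half of the dictionary. The flag congruences are only \emph{necessary} conditions on $(\mu_v)$, and actually producing a design on $p^n$-subsets that attains the prescribed pattern modulo $p$ is delicate: for part (1) such a design cannot come from a genuine integer $(p^n-1)$-design, since the integer flag relations would force $\mu_v=\mu_0\binom{d-v}{p^n-v}/\binom{d}{p^n}$, hence (as $\binom{d}{p^n}$ is then a unit) proportional to the complete vector, so the design must be a modular one that does not lift --- and this is exactly where the combinatorial theory of integral designs is needed. Concretely I would build the required designs as explicit signed combinations of $p^n$-subsets --- for part (1) recursively in $r$ from a primitive block --- and then certify the vanishing and non-vanishing of each covering number modulo $p$ by the carry count of Lemma~\ref{lemkum}. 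Checking the non-proportionality demanded by condition (ii) in part (1) uniformly for all $r$ with $p\nmid r(r+1)$ is the remaining, more routine, binomial-coefficient computation via Lemma~\ref{lem224}.
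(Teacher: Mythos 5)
Your reduction to design language is sound and in fact coincides with the paper's own first step: equating coefficients in $\psi_{1,v}(u)=\lambda_v f_v$ gives the paper's relation $(\star)$, namely $\binom{a+b}{v}\lambda_v = \binom{b}{v}\lambda_0$, which is just the iterated form of your flag congruence $(d-v)\mu_{v+1}\equiv(p^n-v)\mu_v$. Your reformulation of condition (ii) as non-proportionality to the complete covering vector, your observation that condition (ii) is automatic in part (2) (where $H^0\neq 0$ forces all $\binom{d-v}{p^n-v}\equiv 0$) and essentially free in part (1) (where the complete vector has unit entries at levels $v\geq 1$), and your remark that the part (1) design cannot be an honest integral design, are all correct and consistent with the paper. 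You have therefore correctly isolated the crux: one must actually exhibit a mod-$p$ design on $p^n$-subsets realizing the Plan A pattern $(\mu_0,0,\dots,0)$ with $p\nmid\mu_0$, and no general existence theorem can be invoked for this (the Graver--Jurkat--Wilson theorem concerns integral designs, and admissibility under the flag congruences is only a necessary condition).

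That crux is exactly what your proposal does not supply, so there is a genuine gap. For neither part do you write down a candidate $u$: ``build the required designs as explicit signed combinations of $p^n$-subsets, recursively in $r$ from a primitive block'' is a statement of intent, not a construction, and the verification you defer to ``the carry count of Lemma~\ref{lemkum}'' cannot be performed on an unspecified object. This missing step is the bulk of the paper's proof. For part (1) the paper takes $u=\sum_{i=0}^{p^n-1}(i+1)v_i$, where $v_i$ is the sum of all tabloids whose second row meets $\{1,\dots,p^n-1\}$ in exactly $i$ points, and then proves $\psi_{1,0}(u)=c\cdot\emptyset$ with $p\nmid c$ and $\psi_{1,s}(u)=0$ for $s\geq 1$ by analysing the coefficients $A_{s,t}=\sum_{m}(m+1)\binom{p^n-1-t}{m-t}\binom{rp^n-s+t+1}{(r-1)p^n+m+1}$: it shows $A_{s,s-1}\equiv 0\pmod p$ and telescopes $A_{s,t}-A_{s,t-1}=\binom{(r+1)p^n-s-1}{rp^n-1}\equiv 0\pmod p$ using a Vandermonde-type convolution identity together with Kummer's lemma --- a computation with no analogue in your proposal. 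For part (2) the paper takes $u$ to be the sum of all tabloids having $1,\dots,p^n$ in the first row, and checks $p\nmid\binom{a}{p^n}$ while $p\mid\binom{a-v}{p^n-v}$ for $1\leq v\leq p^n-1$ via Lemma~\ref{lem224} and Corollary~\ref{cor225}; this is short, but it still requires naming the element, which you never do. Until you produce such explicit elements and carry out the covering-number verifications, what you have is a correct reformulation of what must be constructed, not a proof of the theorem.
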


\begin{proof}

Suppose for a partition $(a,b)$ in the class of partitions considered in $(1)$ and $(2)$ in the theorem, there is an explicit $u \in M^{(a, b)}$, 
 such that for each 

\begin{align*}
\psi_{1,v}: &M^{(a,\ b)} \to M^{(a+b-v,\ v)}\\
                & \overline{i_1 \dots i_b} \mapsto \sum_{\{j_1,\dots,j_v\} \in [b]^{(v)}}\overline{i_{j_1} \dots {i_{j_v}}}
\end{align*}
where $0 \leq v \leq b-1, \psi_{1,v}(u)=\lambda_vf_v$, where $\lambda_v \neq 0$ for some $v$, $f_v=f_{(a+b-v,\ v)}$, $[b] = \{1, 2, \dots, b \}$ and for a set $X, X^{(v)}$ denotes the set of all subsets of size $v$ of X.

Let $u = \sum{\lambda_{i_1...i_b}}\overline{i_1\dots i_b}$ where the sum is over all the subsets of size $b$ of $\{1,2,\dots$ $a+b\}$. Then for a fixed $v$, $1\leq v \leq b-1$,
\[
\psi_{1,v}\left(\sum{\lambda_{i_1...i_b}}\overline{i_1\dots i_b}\right) = \sum{\lambda_{i_1...i_b}(\overline{i_1 \dots i_v} + \dots) = \lambda_v(\overline{1 \dots v} + \dots)} = \lambda_v f_v,
\]
where both sums are over all the $b$-sets of $\{1, 2, \dots, a+b\}$. Counting the total number of tabloids involved on both sides, i.e. equating the sum of coefficients on both sides,  we have
$\binom{a+b}{v}\lambda_v = \binom{b}{v} \sum{\lambda_{i_1...i_b}}$. Replace $v$ by $b-v$ to get

\[
\binom{a+b}{b-v}\lambda_{b-v} = \binom{b}{b-v} \sum{\lambda_{i_1...i_b}}
\]
\begin{align*}
&\Rightarrow \frac{(a+b)(a+b-1)\dots (a+v+1)}{(b-v)!} \lambda_{b-v}=\frac{b(b-1) \dots (b-v+1)}{v!}\sum{\lambda_{i_1...i_b}}\\
&\Rightarrow (a+b)(a+b-1)\dots (a+v+1) v! \lambda_{b-v} = b(b-1) \dots (b-v+1) (b-v)!  \sum{\lambda_{i_1...i_b}}\\
&\Rightarrow (a+b)(a+b-1)\dots (a+v+1) \lambda_{b-v} = b(b-1) \dots (v+1) \sum{\lambda_{i_1...i_b}} \label{star} \tag{$\star$}
\end{align*}

\underline{\emph{Case $1: a = r p^n, \ b = p^n,\text{ and p }\nmid r.$}}

\noindent
Recall that for an integer $m$, $v_p(m)$ is the greatest integer $t$ such that $p^t | m$.

\noindent
\underline{Claim:} For p an odd prime,
\[
v_p((r+1)p^n-i) = v_p(p^n-i) \quad \forall 1\leq i \leq p^n-1.
\]

\noindent
\underline{Proof of claim:}

This follows from one of the well-known properties of non-archimedean valuation:
\[
v_p(m) \neq v_p(n) \Rightarrow v_p(m+n) = \text{ min} \{ v_p(m),v_p(n)\}.
\]

Next, we work with the case $\lambda \in \mathbb{Z}$ only. From the above claim and equation~\eqref{star}, $v_p(\lambda_{b-v}) = v_p(\sum{\lambda_{i_1...i_b}})$ for all $1\leq v \leq b-1$, i.e.\ $v_p(\sum{\lambda_{i_1...i_b}}) = v_p(\lambda_1) = v_p(\lambda_2) = \dots = v_p(\lambda_{b-1}) $.
It is also easy to see from~\eqref{star} that $\lambda_{b-v} \equiv \sum{\lambda_{i_1...i_b}}\pmod {p}$ for all $1\leq v \leq b-1$, i.e.\ $\sum{\lambda_{i_1...i_b}} \equiv \lambda_1 \equiv \lambda_2 \equiv \dots \equiv \lambda_{b-1} \pmod{p}$ i.e.\ $\sum{\lambda_{i_1...i_b}} = \lambda_1 = \lambda_2 = \dots = \lambda_{b-1}$ in $k$.  We believe these necessary conditions are also sufficient: it is likely that we could find a $u$ for each of the values of $\sum{\lambda_{i_1...i_b}}$ mod $p$. Here we give $u$ in the case $\sum{\lambda_{i_1...i_b}} \equiv 0 \pmod p$: let $u = \sum_{i=0}^{p^n-1}(i+1)v_i$, where 
\[
v_i = \sum\{\{t\} \in M^{(rp^n,p^n)} \mid \text{exactly } i \text{ of } \{1, 2, \dots, p^n-1\} \text{ lie in row two of } \{t\}\},
\]
\noindent
for $0 \leq i \leq p^n-1$.
In addition to a similar argument to Hemmer~\cite{Hemmer12}, Lemma~\ref{lem224} and Corollary~\ref{cor225} show that $\psi_{1,0}(u)=c \cdot \emptyset \text{  where }p \nmid c$, and $\psi_{1,i}(u) = 0$ for all $i \geq 1$:

$\bullet$ Counting the number of tabloids that appear in the sum defining $v_i$, there are $\binom{p^n - 1}{i}$ choices for the row two entries from $\{1, 2, \dots, p^n -1\}$ and $\binom{rp^n +1}{p^n - i}$ for the remaining entries from $\{p^n, p^n +1 ,\dots, (r+1)p^n \}$. Hence, $\psi_{1, 0}(v_i) = \binom{p^n - 1}{i} \binom{rp^n+1}{p^n-i} \emptyset$. By Lemma ~\ref{lem224}, $\psi_{1, 0}(v_i) = 0$ for all $i \notin \{0, p^n -1\}$. Thus, $\psi_{1, 0}(u) = \psi_{1, 0}(v_0) + p^n\psi_{1,0}(v_{p^n - 1}) = \binom{p^n - 1}{0}\binom{rp^n+1}{p^n} \emptyset = c \cdot \emptyset$ for some $c$ with $p \nmid c$. 

$\bullet$
 For $1 \leq t \leq s < p^n$, the coefficient of 
 \[
 \overline{1, 2, 3, \dots , t, p^n, p^n +1, \dots p^n +s -t -1} \in M^{((r+1)p^n - s, s)}
 \]
 \noindent
 in $\psi_{1, s}(v_i)$ is $\binom{p^n - 1 -t}{i - t}\binom{rp^n -s +t +1}{p^n -s +t -i}$. This follows from counting the number of tabloids in the sum defining $v_i$ that contribute to the coefficient when we evaluate $\psi_{1, s}(v_i)$: such a tabloid must have $\{1, 2, \dots, t\}$ in the second row, so there are $\binom{p^n - 1- t}{i -t}$ choices for the remaining entries from $\{1, 2, \dots, p^n -1 \}$ and $\binom{rp^n -s + t + 1}{p^n - s + t- i}$ choices for the remaining entries from $\{p^n, p^n+1, \dots, (r+1)p^n\}$.

\indent 
Now let $A_{s, t}$ be the coefficient of 
  \[
 \overline{1, 2, 3, \dots , t, p^n, p^n +1, \dots p^n +s -t -1} \in M^{((r+1)p^n - s, s)}
 \]
\noindent
in $\psi_{1, s}(u)$. Then
\[
A_{s, t} = \sum_{m =t}^{p^n -1}(m+1)\binom{p^n -1 -t}{m-t}\binom{rp^n -s +t +1}{(r-1)p^n + m+1}.
\]

 \noindent
 \underline{Claim}

$\bullet$ For $1 \leq s < p^n$, $A_{s, s-1} \equiv 0 \pmod p$.

$\bullet$ For $1 \leq t \leq s < p^n$, we have
\[
A_{s, t} - A_{s, t-1} = \binom{(r+1)p^n - s- 1}{rp^n -1} \equiv 0 \pmod p.
\]
 \underline{Proof of claim:}
 
 When $t = s-1$, the second binomial coefficient in each term in the sum defining $A_{s, s-1}$ is $\binom{rp^n}{(r-1)p^n + m + 1}$, which is congruent to zero except for the last term $m = p^n -1$ by Lemma~\ref{lem224}, in which case the factor $m+1$ in the term is zero.
 
 To prove the second bullet point, apply the identity $\binom{rp^n -s +t +1}{(r-1)p^n + m + 1} = \binom{rp^n -s +t}{(r-1)p^n + m}+\binom{rp^n -s +t}{(r-1)p^n + m + 1}$ to the second coefficient in the defining sum. Expand out and collect term to obtain:
 \begin{align*}
 A_{s, t} &= (t+1)\binom{p^n - t -1}{0} \binom{rp^n -s +t}{(r-1)p^n +t}\\
 &+ \sum_{w =t}^{p^n -2}\left[ (w+1) \binom{p^n - t -1}{w - t} + (w + 2)\binom{p^n -t -1}{w - t +1}\right]\binom{rp^n - s + t}{(r-1)p^n + w + 1}\\
 &+ p^n\binom{p^n - t- 1}{p^n -t -1}\binom{rp^n -s + t}{rp^n}.
 \end{align*}
\noindent
Finally, replace each $(w + 1)\binom{p^n -t -1}{w - t} + (w + 2)\binom{p^n -t -1}{w - t +1}$ in the above equation by
\[
(w + 1)\binom{p^n -t}{w -t +1} + \binom{p^n - t - 1}{w -t +1}
\]
\noindent
and subtract off
\[
A_{s, t-1} = \sum_{w = t-1}^{p^n -1}(w + 1)\binom{p^n -t}{w -t +1}\binom{rp^n -s + t}{(r-1)p^n + w +1}
\]
\noindent
to obtain
\[
A_{s, t} - A_{s, t-1} = \sum_{w = t}^{p^n - 1}\binom{p^n - t -1}{w - t}\binom{rp^n -s +t}{(r-1)p^n + w} = \binom{(r+1)p^n - s- 1}{rp^n -1}.
\]

The last equality follows from the identity
\[
\sum_{k}\binom{l}{m + k}\binom{s}{n +k} = \binom{l+s}{l -m +n}, 
\]
\noindent
for $ l \geq 0$ and integers $m, n$ in \cite{concrete}, (2.53). Here we take $l = p^n -t -1, s= rp^n -s + t, k =w, m =-t$, and $n = (r-1)p^n$.

By Lemma~\ref{lemkum}, $A_{s, t} - A_{s, t-1} \equiv 0 \pmod p$: expanding $rp^{n} -1$ in $p$-ary notation, all the digits corresponding to $p^0, p, \dots, p^{n-1}$ are $p -1$ and some digit of $p^n- s $ corresponding to $p^0, p, \dots, p^{n-1}$ is non-zero as $1 \leq s <p^n$. Thus adding them together in $p$-ary notation will always result in at least one carry. Thus, the claim is true.

   Finally, $\psi_{1,p^n-1}(f_{(rp^n,\ p^n)}) = \binom{rp^n+1}{1}f_{(rp^n+1,\ p^n-1)}$ so condition $(2)$ of Theorem~\ref{ext} is satisfied, too, and we are done by symmetry (c.f. Remark $5.3$ in~\cite{Hemmer12}).
   

    Later we will see that the necessary conditions for $u$ for  a general $2$-part partition turn out to be sufficient over $\mathbb{Z}$.

\underline{\emph{Case $2: b = p^n, a\equiv (-1) \pmod{p^{n+1}}$}}.

\noindent
Since $a\equiv (-1) (p^{l_p(b)})$, and $b = p^n $, we have

$a = (p-1) + (p-1)p + \dots + (p-1) p^n + a_{n+1}p^{n+1} + \dots$, where $a_m \geq1$ for some $m \geq (n+1)$.

\noindent
Thus, 
\begin{align*}
v_p(a+2) &= v_p(1)\\
v_p(a+3) &= v_p(2) \\
&\dots \\
v_p(a+b) &= v_p(b-1) 
\end{align*}

\noindent
Therefore, \eqref{star} implies 
\[
v_p(a+v+1) + v_p(\lambda_{b-v}) = v_p(b) + v_p(\sum{\lambda_{i_1...i_b}}).
\]

\noindent
Since $1 \leq v \leq b-1$ and $a+1 \equiv 0 \pmod{p^{n+1}}$, $v_p(a + 1 + v) = v_p(v)$ so we have
\[
v_p(v) + v_p(\lambda_{b-v}) = v_p(b) + v_p(\sum{\lambda_{i_1...i_b}}). \label{2star} \tag{$\star$ $\star$}
\]

\noindent
Now as $ b = p^n$, $v_p(v) < v_p(b)$ for all $1 \leq v \leq b-1$. Thus, for~\eqref{2star} to hold, $v_p(\lambda_{b-v}) > 0$ i.e. $\lambda_{b-v} = 0$ in $k$ for all $v$. This means that there is \emph{no} non-trivial fixed point in $\im\psi_{1,b-v}$ for $1 \leq v \leq b-1.$ Hence, if we can choose a $u$ such that $\psi_{1,v} (u) = \lambda_v f_v$ for $0 \leq v \leq b-1$, $\psi_{1,v}(u)$ must vanish for all $1\leq v\leq b-1$. 

Note that this is not the case if $b$ is not a $p$-power: if $b = p^r \tilde{b}$ where $\tilde{b} \geq 2$ and $ p \nmid \tilde{b}$, then a similar argument to the above argument shows that there is no non-trivial fixed point in any of the images of 
\begin{align*}
&\psi_{1,b-1}, \psi_{1,b-2}, \dots, \psi_{1, b-p^r+1},\\
&\psi_{1,b-p^r-1}, \psi_{1, b-p^r-2}, \dots, \psi_{1, b-2p^r+1},\\
&\dots\\
&\psi_{1,b-(\tilde{b}-1)p^r -1},\psi_{1,b-(\tilde{b}-1)p^r -2}, \dots, \psi_{1,b-\tilde{b}p^r -1}.
\end{align*}

\noindent
However, $v_p(p^r) = v_p(2p^r) = \dots = v_p((\tilde{b}-1)p^r) = v_p(\sum{\lambda_{i_1...i_b}})$, so the images of $\psi_{1,p^r}, \psi_{1,2p^r},$ $\dots, \psi_{1,(\tilde{b}-1)p^r}$ may contain a non-trivial fixed point. 

\noindent

This is the reason why \textbf{Plan A} is particularly practicable if $b$ is a $p$-power.  Our candidate $u$ when $b = p^n$ is
\[
u = \sum \{\{t \} \in M^{(a,\ p^n)} | 1, 2, \dots, p^n \ \text{appear in the first row of} \{t\}\}.
\]

\noindent
Then $\psi_{1,0}(u) = \binom{a}{p^n} \overline{\emptyset}$ and $\psi_{1,v} (u)= \binom{a-v}{p^n-v} \cdot m$, where $m \in M^{(a,p^n)}, 1\leq v \leq p^n-1$. Since $a \equiv (-1) \pmod{p^{n+1}}$, $p \nmid \binom{a}{p^n}$ and $p | \binom{a-v}{p^n-v}$ for $1\leq v \leq p^n-1$ by Lemma ~\ref{lem224} and Corollary~\ref{cor225}. We note that  $H^0(\mathfrak{S}_{a+p^n},S^{(a,\ p^n)}) \neq 0$, by Theorem~\ref{H0}, so $u$ satisfies the condition of \textbf{Theorem~\ref{ext}} and we are done.
\end{proof}

It can be seen that \textbf{Plan A} does not seem to work for general $2$-part partitions. To progress further, we need to have a different approach. If we work over $\mathbb{Z}$ instead of $k$, the necessary conditions for $\lambda_v \neq 0$ turn out to be sufficient conditions as well. In fact, they arise from fundamental and  until recently  poorly understood combinatorial structures called \textbf{\emph{integral designs}} (or $t$-designs as in Dembowski~\cite{Dembowski}), and the $u$ we want over $\mathbb{Z}$ turns out to be an $(a+b,b,\lambda_0,\dots, \lambda_{b-1})$-design.
\begin{Definition}
Given integers $t, v, l, \lambda_0, \dots, \lambda_t$, where $v\geq 1$ and $0 \leq t, l \leq v$, let $V= \{1, 2, \dots, v\}$, $X = \{ x | x\subseteq V\}$, and $V_l = \{x | x\subseteq V, |x| = l \} = [v]^{(l)}$. The elements of $X$ are called \emph{blocks} and those in $V_l$ are called \emph{$l$-blocks} or \emph{blocks of size $l$}. An \emph{integral $(v, l, \lambda_0, \dots, \lambda_t)$-design} associates integral multiplicities $c(x)$ to $l$-blocks $x$ and zero to all other blocks such that
\[
\hat{c}(y) := \sum_{x\supseteq y} c(x)=\lambda_s, \quad \text{if}\quad |y| = s \leq t. 
\]
If all the parameters $\lambda_i$'s are zero, it is called a \emph{null design}.
\end{Definition}
\begin{Theorem}[Graver-Jurkat~\cite{GJ73}, Wilson~\cite{W73}]\label{design}
There exists an integral $(v, l, \lambda_0, \lambda_1,$ $\dots, \lambda_t)$-design if and only if $\lambda_{s+1} = \frac{l-s}{v-s}\lambda_s$ for $0\leq s<t$. 
\end{Theorem}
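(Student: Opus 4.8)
Necessity and sufficiency are of very different characters, so I would treat them separately. \emph{Necessity} is a routine double count. Suppose an integral design $c$ exists, fix $0\le s<t$ and an $s$-block $y$, and count $\sum c(x)$ over all chains $y\subseteq z\subseteq x$ with $|z|=s+1$ and $x$ an $l$-block, in two ways. Summing over $z$ first, there are $v-s$ blocks $z$ of size $s+1$ containing $y$, each contributing $\hat c(z)=\lambda_{s+1}$, for a total of $(v-s)\lambda_{s+1}$. Summing over $x$ first, each $l$-block $x\supseteq y$ is counted once for every element of $x\setminus y$, that is $l-s$ times, for a total of $(l-s)\hat c(y)=(l-s)\lambda_s$. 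Equating gives $(v-s)\lambda_{s+1}=(l-s)\lambda_s$, which is the stated recurrence.

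\emph{Reduction of sufficiency to a single level.} Conversely, assume the recurrence holds with every $\lambda_s\in\mathbb Z$; I must produce an integral $c$. One may assume $t\le l$, since for $t>l$ the recurrence forces $\lambda_s=0$ for $s>l$ and the problem collapses to top level $l$, solved by the uniform multiplicity $c\equiv\lambda_l$. For $a\le b$ let $W_{a,b}$ be the inclusion matrix whose rows are the $a$-blocks and columns the $b$-blocks, with a $1$ exactly when the $a$-block lies in the $b$-block, so that $(W_{s,l}c)_y=\hat c(y)$ and the design conditions read $W_{s,l}c=\lambda_s\mathbf 1$ for $0\le s\le t$. The same flag count yields the identities $W_{s,t}W_{t,l}=\binom{l-s}{t-s}W_{s,l}$ and $W_{s,t}\mathbf 1=\binom{v-s}{t-s}\mathbf 1$. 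Hence if $W_{t,l}c=\lambda_t\mathbf 1$ then $\binom{l-s}{t-s}W_{s,l}c=\lambda_t\binom{v-s}{t-s}\mathbf 1=\lambda_s\binom{l-s}{t-s}\mathbf 1$, the last equality by the recurrence; since $\binom{l-s}{t-s}\ge 1$ we cancel it and recover $W_{s,l}c=\lambda_s\mathbf 1$ for every $s\le t$. Thus it is enough to solve the single top system $W_{t,l}c=\lambda_t\mathbf 1$ over $\mathbb Z$. Over $\mathbb Q$ this is always consistent, the uniform block $c\equiv\lambda_t/\binom{v-t}{l-t}$ being a rational solution, so the entire difficulty is integrality.

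\emph{The integral heart, and the main obstacle.} What remains is to show that $\lambda_t\mathbf 1$ lies in the $\mathbb Z$-column span of $W_{t,l}$. I would do this by induction on $v$ through an add-a-point construction: partition the $l$-blocks of $[v]$ according to whether they contain the point $v$, placing an integral $l$-design $a$ on $[v-1]$ on the blocks avoiding $v$ and using an integral $(l-1)$-design $b$ on $[v-1]$ to weight the traces of the blocks containing $v$. Matching shadows level by level forces the parameters $\nu_j=\lambda_{j+1}$ of $b$ and $\mu_s=\lambda_s-\lambda_{s+1}$ of $a$, and a short computation shows each family satisfies its own recurrence and stays integral, so $b$ is supplied by the inductive case $(v-1,l-1,t-1)$. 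The delicate point — and the crux of the theorem — is that one cannot demand that $b$ be a full $t$-design, because its forced top parameter $\tfrac{l-t}{v-t}\lambda_t$ need not be an integer (already $\tfrac13$ for $(v,l,t)=(4,2,1)$). One must instead take $b$ only as a $(t-1)$-design and absorb its \emph{varying} level-$t$ shadow into $a$; the identity $\sum_{y\supseteq w}(\lambda_t-\hat b(y))=(v-l)\lambda_t=(l-t+1)\mu_{t-1}$ confirms that this prescribed shadow passes the obvious necessary test, and the residual task is exactly to realise an integer function on the $t$-blocks of $[v-1]$, passing that test, as the level-$t$ shadow of an integral $(t-1)$-design. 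This is the same realizability statement one point down, so a suitably strengthened induction closes the loop. Establishing this integral surjectivity of the shadow map — equivalently, the Smith normal form of the inclusion matrices $W_{t,l}$, whose invariant factors are the $\binom{l-i}{t-i}$ — is the main obstacle, and is precisely the content for which the result is credited to Graver--Jurkat and Wilson.
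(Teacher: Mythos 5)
Your necessity count and your reduction to the single top-level system $W_{t,l}c=\lambda_t\mathbf{1}$ are both correct; they reproduce exactly the content of the paper's quoted Lemma~\ref{matrixId} and Theorem~\ref{fundamental}. The genuine gap is in the sufficiency half. Your add-a-point induction on $v$ requires, at its final step, realizing a \emph{non-constant} integer function (namely $y\mapsto\lambda_t-\hat b(y)$ on the $t$-blocks of $[v-1]$) as the level-$t$ shadow of an integral design whose lower shadows are the constants $\mu_s$. That is \emph{not} ``the same realizability statement one point down'': the theorem you are proving concerns only constant shadows $\lambda_0,\dots,\lambda_t$, so the inductive hypothesis available to you is strictly weaker than what the step demands, and the loop does not close. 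To close it you would have to formulate and prove a stronger statement --- a characterization of the $\mathbb{Z}$-image of $W_{t,l}$ (equivalently the invariant-factor/Smith-form result you allude to, or a surjectivity statement for the shadow map), and you explicitly decline to do so, calling it ``the main obstacle.'' Since that obstacle is precisely the mathematical content of the theorem, the proposal as written is not a proof.

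It is worth contrasting how the paper confines that same difficulty. The paper inducts on the number of prescribed parameters: given a design $c'$ correct through level $t$, the discrepancy $G_{t+1,l}c_l'-\lambda_{t+1}e_{t+1}$ is automatically a \emph{null} design in $N_{t,t+1}$ (its lower shadows vanish, by Lemma~\ref{matrixId} together with the recurrence), and the quoted surjectivity theorem of Graver--Jurkat, $G_{t+1,l}(N_{t,l})=N_{t,t+1}$ (Theorem~\ref{surject}, which rests on the explicit pod bases of Theorem~\ref{nullBasis}), supplies $d\in N_{t,l}$ with $G_{t+1,l}d_l=d'_{t+1}$; then $c=c'-d$ finishes the step via Theorem~\ref{fundamental}. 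By correcting with null designs, the paper only ever needs integral surjectivity onto \emph{null} designs --- a precisely stated, citable result --- whereas your route leads to an unformulated realization problem for arbitrary prescribed shadows. The repair for your approach is to extract that null-design surjectivity (or Wilson's diagonal form of $W_{t,l}$) as a standalone lemma, prove or cite it, and rerun your induction against it; without that, the crux is merely named, not established.
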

Graver-Jurkat developed a method for constructing a design with prescribed parameters $\lambda_0, \dots, \lambda_t$ satisfying the conditions of Theorem~\ref{design}. We will outline their construction below.

Fixing $t, v$, and $l$, the set of all integral $(v, l, \lambda_0, \dots, \lambda_t)$-designs form a module $C_{tl}(v)$ over $\mathbb{Z}$ and the null designs form a submodule $N_{tl}(v)$. Let $G=(G_{xy})_{x, y \in X}$ be the inclusion matrix on X i.e. 
\[
G_{xy} = 
\begin{cases}
1, &\text{if $x \subseteq y;$}\\
0, &\text{otherwise.}
\end{cases}
\]

\noindent
The transform $\hat{c} = Gc$ is defined by
\[
\hat{c}(u) = (Gc)(u) = \sum_{x \supseteq u}c(x).
\]

By partitioning $X$ into blocks of subsets of size $0 \leq i \leq v$, the vectors $c$ and $d = \hat{c}$ split into blocks $c_l$ (the restriction of $c$ to $V_l$) and $d_t$ (the restriction of $d$ to $V_t$). Furthermore, the matrix $G$ or $G(v)$ splits into blocks $G_{tl}$ or $G_{tl}(v)$ such that
\[
d_t = \sum_{l=0}^{v}G_{tl}c_l, \quad \text{for} \  0\leq t \leq v.
\]

\begin{Lemma}[Graver-Jurkat~\cite{GJ73}]\label{matrixId}
For $s \leq h \leq l$,
\[
\binom{l-s}{h-s}G_{sl} = G_{sh}G_{hl}.
\]
\end{Lemma}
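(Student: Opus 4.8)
The plan is to prove the identity entrywise. Fix a subset $y\in V_s$ (a block of size $s$) indexing a row, and a subset $x\in V_l$ (a block of size $l$) indexing a column; the claim becomes an identity of integers
\[
\binom{l-s}{h-s}(G_{sl})_{yx}=(G_{sh}G_{hl})_{yx}.
\]
The left-hand side is simply $\binom{l-s}{h-s}$ if $y\subseteq x$ and $0$ otherwise, since $(G_{sl})_{yx}=1$ precisely when $y\subseteq x$. For the right-hand side I would expand the matrix product over intermediate blocks $z\in V_h$ of size $h$:
\[
(G_{sh}G_{hl})_{yx}=\sum_{z\in V_h}(G_{sh})_{yz}(G_{hl})_{zx}
=\#\{\,z\in V_h\ :\ y\subseteq z\subseteq x\,\},
\]
because the summand is $1$ exactly when both $y\subseteq z$ and $z\subseteq x$ hold, and $0$ otherwise.

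The core step is then the counting argument for this cardinality. If $y\not\subseteq x$, then no $z$ of size $h$ can satisfy $y\subseteq z\subseteq x$ (any such $z$ would force $y\subseteq x$ by transitivity), so the count is $0$, matching the left-hand side. If $y\subseteq x$, then choosing an intermediate $z$ with $y\subseteq z\subseteq x$ and $|z|=h$ amounts to choosing the $h-s$ elements of $z\setminus y$ from the $l-s$ elements of $x\setminus y$; there are exactly $\binom{l-s}{h-s}$ such choices. This is where the hypothesis $s\le h\le l$ is used, guaranteeing the binomial coefficient counts a genuine (possibly empty but well-defined) collection of subsets. In both cases the two sides agree, which establishes the identity entry by entry and hence as an identity of matrix blocks.

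I do not expect any serious obstacle here: the statement is a clean combinatorial reformulation of the transitivity of the inclusion relation, and the only thing to be careful about is bookkeeping with the block decomposition of $G$, namely that $(G_{sh})_{yz}$ and $(G_{hl})_{zx}$ really do record inclusion $y\subseteq z$ and $z\subseteq x$ respectively for blocks of the indicated sizes. Once the entrywise identity $\binom{l-s}{h-s}[y\subseteq x]=\#\{z:\,y\subseteq z\subseteq x,\ |z|=h\}$ is in hand, the lemma follows immediately. The mild subtlety worth stating explicitly is the degenerate boundary behaviour (e.g.\ $s=h$ or $h=l$), but in each such case the binomial coefficient and the subset count both collapse correctly, so no separate treatment is needed.
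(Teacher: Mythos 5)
Your proof is correct. Note that the paper does not prove this lemma at all: it is quoted as a known result of Graver--Jurkat~\cite{GJ73}. Your entrywise argument --- that under the paper's convention $(G_{tl})_{yx}=1$ iff $y\subseteq x$, both sides evaluated at a pair $(y,x)$ with $|y|=s$, $|x|=l$ equal $\binom{l-s}{h-s}$ when $y\subseteq x$ and $0$ otherwise, since $(G_{sh}G_{hl})_{yx}$ counts the $h$-blocks $z$ with $y\subseteq z\subseteq x$ --- is exactly the standard proof of this fact, and it fills the gap cleanly.
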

\begin{Theorem}[Graver-Jurkat~\cite{GJ73}]\label{fundamental}
Let $0 \leq t \leq l \leq v$. Suppose $c \in \mathbb{Z}_{+}^{X}$ has constant block size $l$, i.e.\ $c(x) = 0 \ \forall x$ with $|x| \neq l$, 
and $(\hat{c})_{t} = \lambda_t e_t$, where $e_t$ is the vector which has components one for each $t$ block. Then $c$ is an integral $(v, l, \lambda_0, \dots, \lambda_t)$-design. Furthermore,
\[
\lambda_s = \frac{\binom{v-s}{t-s}}{\binom{l-s}{t-s}}\lambda_t, \quad \text{for} \ s = 0,1, \dots, t.
\] 
\end{Theorem}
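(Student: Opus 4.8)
The plan is to reduce the entire statement to the single matrix identity of Lemma~\ref{matrixId}, fed by the hypothesis $(\hat{c})_t = \lambda_t e_t$. First I would record the effect of the support condition: since $c$ is supported on blocks of size $l$, we have $c = c_l$ and $c_i = 0$ for $i \neq l$, so the block decomposition $d_t = \sum_{i} G_{ti}\,c_i$ of the transform collapses to
\[
(\hat{c})_s = G_{sl}\,c_l \qquad \text{for every } s.
\]
In particular the hypothesis reads $G_{tl}\,c_l = \lambda_t e_t$, and the whole theorem amounts to computing the integer vectors $G_{sl}\,c_l$ for $0 \le s \le t$ and checking that each is a constant multiple of $e_s$.

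The key step is to invoke Lemma~\ref{matrixId} with $h = t$, which is legitimate since $s \le t \le l$, giving $\binom{l-s}{t-s}\,G_{sl} = G_{st}\,G_{tl}$. Multiplying on the right by $c_l$ and substituting the hypothesis yields
\[
\binom{l-s}{t-s}\,\bigl(G_{sl}\,c_l\bigr) = G_{st}\,\bigl(G_{tl}\,c_l\bigr) = \lambda_t\,G_{st}\,e_t .
\]
It then remains only to evaluate $G_{st}\,e_t$. For a fixed $s$-block $x$, its entry in $G_{st}\,e_t$ counts the $t$-blocks $y$ with $x \subseteq y$, i.e.\ the number of ways to enlarge $x$ by $t-s$ of the remaining $v-s$ points, namely $\binom{v-s}{t-s}$. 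Hence $G_{st}\,e_t = \binom{v-s}{t-s}\,e_s$, and substituting back gives
\[
\binom{l-s}{t-s}\,\bigl(G_{sl}\,c_l\bigr) = \lambda_t\,\binom{v-s}{t-s}\,e_s .
\]

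Finally I would read off both conclusions from this one identity. Because $s \le t \le l$ we have $\binom{l-s}{t-s} \ge 1$, so over $\mathbb{Q}$ every entry of $G_{sl}\,c_l$ equals the common value $\lambda_t\binom{v-s}{t-s}/\binom{l-s}{t-s}$; writing $\lambda_s$ for this value shows $(\hat{c})_s = \lambda_s e_s$, that is, $\hat{c}(y) = \lambda_s$ whenever $|y| = s \le t$, which is precisely the assertion that $c$ is an integral $(v,l,\lambda_0,\dots,\lambda_t)$-design, together with the claimed formula $\lambda_s = \binom{v-s}{t-s}\lambda_t/\binom{l-s}{t-s}$. The only point deserving a word of justification — and the nearest thing to an obstacle — is integrality of $\lambda_s$, but this is automatic: $G_{sl}\,c_l$ is a sum of the integer multiplicities $c(x)$, hence an integer vector, so its equal entries force the rational number $\lambda_s$ to be an integer. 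No estimates or case analysis are needed; all the content is carried by Lemma~\ref{matrixId} and the elementary count $G_{st}\,e_t = \binom{v-s}{t-s}\,e_s$.
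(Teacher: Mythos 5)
Your proof is correct. Note that the paper itself gives no proof of this theorem --- it is quoted from Graver--Jurkat~\cite{GJ73} --- so there is no internal argument to compare against; but your reduction is exactly the natural one and is consistent with the machinery the paper does use: the collapse of the block decomposition to $(\hat{c})_s = G_{sl}c_l$, the application of Lemma~\ref{matrixId} with $h = t$ (legitimate since $s \leq t \leq l$), the elementary count $G_{st}e_t = \binom{v-s}{t-s}e_s$, and the observation that integrality of $\lambda_s$ is forced because $G_{sl}c_l$ is an integer vector, are all valid. Indeed, the paper's own proof of Theorem~\ref{mainDesign} runs on the special cases of these same identities (there $G_{tl} = \frac{1}{l-t}G_{t,t+1}G_{t+1,l}$ and $e_t = \frac{1}{v-t}G_{t,t+1}e_{t+1}$), so your argument fills the omitted proof in precisely the style of the surrounding material.
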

\begin{Theorem}
\[
rank \  G_{tl}(v) = 
\begin{cases}
&\binom{v}{t}, \quad \text{when} \ t\leq l \leq v - t,\\
& \binom{v}{l}, \quad \text{when} \ v - t\leq l \leq v, \ t \leq l.
\end{cases}
\]
\end{Theorem}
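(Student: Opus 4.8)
The plan is to prove the equivalent uniform statement that $\operatorname{rank} G_{tl}(v) = \min\{\binom{v}{t},\binom{v}{l}\}$ for all $t \le l \le v$, observing that the two listed cases merely record which binomial coefficient is smaller: when $t \le l \le v-t$ one has $\binom vt \le \binom vl$, and when $v-t \le l$ one has $\binom vl \le \binom vt$. Since $G_{tl}(v)$ is the $\binom vt \times \binom vl$ inclusion matrix of $t$-subsets into $l$-subsets, its rank is automatically at most $\min\{\binom vt,\binom vl\}$, so in each case it suffices to prove the reverse inequality, i.e.\ that $G_{tl}(v)$ attains full rank on its shorter side.

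First I would dispose of the second case by complementation. Sending every set to its complement turns the relation ``$t$-set $\subseteq$ $l$-set'' into ``$(v-l)$-set $\subseteq$ $(v-t)$-set'', so after relabelling rows and columns the matrix $G_{tl}(v)$ becomes the transpose of $G_{v-l,\,v-t}(v)$; in particular $\operatorname{rank} G_{tl}(v) = \operatorname{rank} G_{v-l,\,v-t}(v)$. One checks directly that the pair $(v-l,\,v-t)$ satisfies the hypotheses of the first case and that $\binom{v}{v-l}=\binom vl$, so the second case reduces to the first. For the first case, $t \le l \le v-t$, I would use Lemma~\ref{matrixId} to reduce to the square boundary matrix $G_{t,\,v-t}(v)$. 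Taking $s=t$, $h=l$, and outer index $v-t$ gives
\[
\binom{v-2t}{\,l-t\,}\,G_{t,\,v-t} \;=\; G_{t,l}\,G_{l,\,v-t},
\]
where the coefficient $\binom{v-2t}{l-t}$ is a nonzero integer because $0 \le l-t \le v-2t$ throughout this range. Hence $\operatorname{rank} G_{t,l} \ge \operatorname{rank} G_{t,\,v-t}$, and as $G_{t,l}$ has only $\binom vt$ rows, it is then enough to show that the square matrix $G_{t,\,v-t}(v)$ is nonsingular.

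The nonsingularity of $G_{t,\,v-t}(v)$ is the heart of the matter, and I expect it to be the main obstacle. Here I would exploit the representation theory of $\mathfrak S_v$, which also fits the theme of the paper: every inclusion map is an $\mathfrak S_v$-homomorphism, and over $\mathbb{Q}$ the permutation module on $i$-subsets decomposes multiplicity-freely as $M^{(v-i,\,i)} \cong \bigoplus_{j=0}^{\min(i,\,v-i)} S^{(v-j,\,j)}$. When $l=v-t$, both the source $M^{(t,\,v-t)}$ and the target $M^{(v-t,\,t)}$ carry exactly the summands $S^{(v-j,\,j)}$ with $0 \le j \le t$, each once; so by Schur's lemma $G_{t,\,v-t}$ acts as a scalar $\mu_j$ on the $j$-th summand and is nonsingular precisely when every $\mu_j \ne 0$.

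Thus the delicate computation — and the step most likely to require care — is the evaluation of these scalars $\mu_j$. I would carry it out either by applying $G_{t,\,v-t}$ to a single explicit vector generating each $S^{(v-j,\,j)}$, or by factoring $G_{t,\,v-t}=\Delta P$, where $P$ is the $\mathfrak S_v$-isomorphism given by complementing columns and $\Delta$ is the adjacency matrix of the Kneser graph $K(v,t)$ on $t$-subsets; either route yields $\mu_j = \pm\binom{v-t-j}{\,t-j\,}$. This is nonzero for every $0 \le j \le t$ exactly because $v-t-j \ge t-j$, i.e.\ because $v \ge 2t$, which is forced by $t \le l \le v-t$. Hence $G_{t,\,v-t}(v)$ is nonsingular, and combined with the two formal reductions above this settles both cases. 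The only genuinely substantive input is this spectral computation; once the $\mu_j$ (equivalently, the Kneser eigenvalues) are in hand, the reductions via complementation and Lemma~\ref{matrixId} are purely formal.
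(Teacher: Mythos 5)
Your proof is correct, but there is nothing internal to compare it against: the paper states this rank theorem bare, quoting it (like the surrounding results) from the integral-design literature of Graver--Jurkat~\cite{GJ73} and Wilson~\cite{W73}; the full-rank statement itself is classical (it goes back to Gottlieb). Checking your steps: the uniform reformulation $\operatorname{rank} G_{tl}(v)=\min\{\binom{v}{t},\binom{v}{l}\}$ is equivalent to the two cases as stated; complementation does carry $G_{tl}(v)$ to the transpose of $G_{v-l,\,v-t}(v)$ up to row/column permutations, and the pair $(v-l,v-t)$ satisfies the first-case hypotheses; Lemma~\ref{matrixId} with $(s,h,l)\mapsto(t,l,v-t)$ gives $\binom{v-2t}{l-t}G_{t,v-t}=G_{t,l}G_{l,v-t}$ with nonzero integer coefficient, so everything reduces, as you say, to nonsingularity of the square matrix $G_{t,\,v-t}(v)$. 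Your treatment of that step is also sound: composing with the complementation permutation turns $G_{t,\,v-t}$ into the disjointness (Kneser) matrix on $t$-subsets, both permutation modules are multiplicity-free over $\mathbb{Q}$ with the same constituents $S^{(v-j,j)}$, $0\le j\le t$, and the eigenvalues $(-1)^j\binom{v-t-j}{t-j}$ are nonzero exactly because $v\ge 2t$. The only quoted-not-proved ingredient is this spectral formula (equivalently, Young's rule plus the Kneser eigenvalue computation); it is standard and correct, but a self-contained write-up should either cite it or derive it, e.g.\ by evaluating the disjointness matrix on a polytabloid generating each $S^{(v-j,j)}$. The contrast with the source literature is worth noting: Graver--Jurkat's development is purely combinatorial and works over $\mathbb{Z}$, producing explicit module bases of the null designs $N_{tl}(v)$ by pods (Theorem~\ref{nullBasis}) via induction on $v$ using the map $\phi$ of Lemma~\ref{map}, with the rank formula then matching rank--nullity (Corollary~\ref{dim}); that integral refinement is what the paper's later constructions rely on. Your argument is shorter, conceptual, and closer to the representation-theoretic theme of the paper, but it is a characteristic-zero rank statement only and does not by itself recover the $\mathbb{Z}$-module structure.
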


\begin{Corollary}\label{dim}
 dim$N_{tl}(v) = \binom{v}{l} - \binom{v}{t}$ and dim$C_{tl}(v) = dim N_{tl}(v) + 1$ for $t \leq l \leq v-t$.
\end{Corollary}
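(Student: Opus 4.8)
The plan is to read off both dimensions from a single linear map, namely $G_{tl}(v)$, using the rank formula in the immediately preceding theorem together with rank--nullity. Throughout I work over $\mathbb{Q}$ (equivalently, with the ranks of the underlying free $\mathbb{Z}$-modules, which agree with the $\mathbb{Q}$-dimensions after tensoring), since this is what the dimension notation refers to and the rank of the integer matrix $G_{tl}(v)$ is unchanged by the base change.

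First I would make the identifications explicit. After restricting to vectors of constant block size $l$, a design is precisely a vector $c_l \in \mathbb{Q}^{V_l}$, and the defining condition of an integral $(v,l,\lambda_0,\dots,\lambda_t)$-design becomes $(\hat{c})_t = G_{tl}(v)\,c_l = \lambda_t e_t$ for some scalar $\lambda_t$. Here I invoke Theorem~\ref{fundamental}: once $c$ has constant block size $l$ and $(\hat{c})_t \in \langle e_t\rangle$, the values at all lower levels $s < t$ are automatically constant and determined, so no further constraints appear. Under this identification $N_{tl}(v) = \ker G_{tl}(v)$ and $C_{tl}(v) = G_{tl}(v)^{-1}\bigl(\langle e_t\rangle\bigr)$. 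The first formula is then immediate from rank--nullity: the domain $\mathbb{Q}^{V_l}$ has dimension $\binom{v}{l}$, and for $t \leq l \leq v-t$ the rank formula gives $\operatorname{rank} G_{tl}(v) = \binom{v}{t}$, whence $\dim N_{tl}(v) = \binom{v}{l} - \binom{v}{t}$.

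For the second formula I would use the standard fact that $\dim G_{tl}(v)^{-1}(L) = \dim \ker G_{tl}(v) + \dim\bigl(L \cap \im G_{tl}(v)\bigr)$ for a subspace $L$, applied to the line $L = \langle e_t\rangle$. Thus it suffices to show $e_t \in \im G_{tl}(v)$. This is seen by feeding in the all-ones vector $e_l$, i.e.\ the design assigning multiplicity $1$ to every $l$-block: each $t$-block $y$ lies in exactly $\binom{v-t}{l-t}$ of the $l$-blocks, so $G_{tl}(v)\,e_l = \binom{v-t}{l-t}\,e_t$. Since $0 \leq l-t \leq v-2t \leq v-t$, the coefficient $\binom{v-t}{l-t}$ is nonzero, so $e_t \in \im G_{tl}(v)$ and $\dim\bigl(\langle e_t\rangle \cap \im G_{tl}(v)\bigr) = 1$. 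Therefore $\dim C_{tl}(v) = \dim \ker G_{tl}(v) + 1 = \dim N_{tl}(v) + 1$.

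The only substantive ingredient is the rank formula for $G_{tl}(v)$ in the range $t \leq l \leq v-t$, which I am taking from the preceding theorem; everything else is linear algebra. The main point to get right is the bookkeeping in the identification step: I must be careful that "constant block size $l$" together with constancy at the single level $t$ genuinely captures the full design condition, which is exactly what Theorem~\ref{fundamental} supplies, so that $C_{tl}(v)$ really is the preimage of a single line and not a space cut out by extra constraints at the intermediate levels. Once that identification is clean, the two dimension counts follow directly.
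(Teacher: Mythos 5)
Your proof is correct and is exactly the argument the paper intends: the corollary is stated as an immediate consequence of the preceding rank theorem, and your rank--nullity computation for $N_{tl}(v)=\ker G_{tl}(v)$, together with the identification $C_{tl}(v)=G_{tl}(v)^{-1}\bigl(\langle e_t\rangle\bigr)$ via Theorem~\ref{fundamental} and the observation $G_{tl}(v)e_l=\binom{v-t}{l-t}e_t$, fills in precisely the details the paper leaves implicit.
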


\begin{Definition}(Graver-Jurkat~\cite{GJ73})\label{support}
The \emph{support} of an element $c \in \mathbb{Z}^{X}$ is the collection of elements of $X$ which have non-zero multiplicities:
\[
\supp(c) = \{x | c(x) \neq 0 \}.
\]
\noindent
The \emph{foundation} of an element $c \in \mathbb{Z}^{X}$ is the union of the blocks in its support:
\[
\found(c) = \bigcup_{x: c(x) \neq 0} x.
\]
\end{Definition}

\noindent
\begin{Lemma}[Graver-Jurkat~\cite{GJ73}]\label{foundationSize}
If $c \in C_{tl}(v) \setminus N_{tl}(v)$, then $\found(c) = V$. On the other hand, if $c \in N_{tl}(v)$, then $|\found(c)| \geq t+l+1$.
\end{Lemma}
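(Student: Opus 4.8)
The plan is to prove the two assertions by quite different mechanisms, so I would separate them. For the first, suppose $c \in C_{tl}(v) \setminus N_{tl}(v)$, i.e.\ $c$ is a genuine $(v,l,\lambda_0,\dots,\lambda_t)$-design with some $\lambda_s \neq 0$; I work in the standard regime $t \geq 1$ (hence $l \geq t \geq 1$), which the argument genuinely needs — for $t = 0$ a single $l$-block already gives a non-null design with foundation of size $l$. By Theorem~\ref{design} the parameters satisfy $\lambda_{s+1} = \frac{l-s}{v-s}\lambda_s$, so $\lambda_0 = 0$ would force every $\lambda_s = 0$; hence $\lambda_0 \neq 0$ and therefore $\lambda_1 = \frac{l}{v}\lambda_0 \neq 0$. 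Now for every point $p \in V$ the design condition reads $\sum_{x \ni p} c(x) = \hat{c}(\{p\}) = \lambda_1 \neq 0$, so some block through $p$ carries a non-zero multiplicity, that is $p \in \found(c)$. Since $p$ is arbitrary this gives $\found(c) = V$.

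For the second assertion I would induct on $t$ with the block size $l$ held fixed, assuming $c \neq 0$ (the zero design being the trivial exception to the stated bound). The base case $t = 0$ is immediate: a non-zero null $0$-design has $\sum_x c(x) = 0$ with all support blocks of size $l$, so there are at least two distinct $l$-blocks, whose union already contains $\geq l+1 = t+l+1$ points. For the inductive step I fix a point $p \in \found(c)$ and pass to an auxiliary design on $V \setminus \{p\}$, and the whole difficulty lies in choosing that auxiliary object correctly.

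The main obstacle is exactly this choice, because of an off-by-one. The naive option is the contracted design $c_p(x) = c(x \cup \{p\})$ on $(l-1)$-blocks, a non-zero null $(t-1)$-design of block size $l-1$; but induction then only yields $|\found(c)| \geq |\found(c_p)| + 1 \geq (t+l-1)+1 = t+l$, one short. The fix is to use instead the residual design $c'$ obtained by discarding every block that contains $p$: a direct computation gives $\hat{c'}(y) = \hat{c}(y) - \hat{c}(y \cup \{p\})$, which vanishes whenever $|y| \leq t-1$, so $c'$ is a null $(t-1)$-design of the same block size $l$ on $V \setminus \{p\}$. To guarantee $c' \neq 0$ I would choose $p$ well: since $\lambda_0 = 0$ forbids a single support block, pick two distinct support blocks $B_1 \neq B_2$ and take $p \in B_2 \setminus B_1$, so that $B_1$ survives in $c'$ while $p \in \found(c) \setminus \found(c')$. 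The induction hypothesis gives $|\found(c')| \geq (t-1)+l+1 = t+l$, and restoring the point $p$ yields $|\found(c)| \geq t+l+1$. Keeping the block size at $l$ rather than dropping to $l-1$ is precisely what recovers the missing unit.
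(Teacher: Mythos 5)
Your proof is correct, but note first that the paper itself contains no proof of this lemma: it is quoted verbatim from Graver--Jurkat~\cite{GJ73}, so your argument can only be compared against the machinery the paper imports, not against an in-paper proof. On its merits, your argument is sound. For the first assertion, the chain ``non-null $\Rightarrow \lambda_0 \neq 0 \Rightarrow \lambda_1 = \frac{l}{v}\lambda_0 \neq 0 \Rightarrow \hat{c}(\{p\}) \neq 0$ for every $p \in V$'' is valid, and your restriction to $t \geq 1$, $l \geq 1$ is not an evasion: for $t=0$ (or $l=0$) the statement as printed is genuinely false ($\lambda_0\delta_x$ with $|x| = l < v$ is a non-null design whose foundation is not $V$), just as the second bound fails for $c = 0$; these implicit standing hypotheses are in force in~\cite{GJ73}. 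For the second assertion, your induction on $t$ at fixed block size $l$ works: the identity $\hat{c'}(y) = \hat{c}(y) - \hat{c}(y \cup \{p\})$ does show that the residual design (discard all blocks through $p$) lies in $N_{t-1,l}(v-1)$ on the ground set $V \setminus \{p\}$, your choice of $p \in B_2 \setminus B_1$ (which exists because $\lambda_0 = 0$ and $c \neq 0$ force at least two support blocks) guarantees $c' \neq 0$, and then $|\found(c)| \geq |\found(c')| + 1 \geq (t-1)+l+1+1 = t+l+1$. Your diagnosis of the off-by-one is also apt: the ``naive'' contraction you reject is exactly the map $\phi$ of Lemma~\ref{map}, which lowers both $t$ and $l$ by one and therefore cannot reach the sharp bound unaided.

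For comparison, the route closest to Graver--Jurkat, and the one most directly supported by what this paper quotes, is linear-algebraic rather than inductive: if $F = \found(c)$ has size $f \leq t+l$, then $c$, viewed as a vector supported on the $l$-subsets of $F$, lies in the kernel of the inclusion matrix $G_{tl}(f)$; by the rank theorem stated just before Corollary~\ref{dim}, $G_{tl}(f)$ has full column rank $\binom{f}{l}$ whenever $f - t \leq l \leq f$, i.e.\ whenever $f \leq t+l$, which forces $c = 0$. That argument disposes of the null-design bound in a few lines, at the cost of invoking the nontrivial rank theorem; your induction is longer but entirely elementary and self-contained, which is a reasonable trade.
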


\begin{Definition}\label{convolution}(Graver-Jurkat~\cite{GJ73})
The \emph{convolution} $\star$ on $\mathbb{Z}+^X$ is defined as follows
\[
(c \star d)(z) = \sum_{x+y = z} c(x)d(y),
\]
\noindent
where $x, y, z \in X, c, d \in \mathbb{Z}_+^X$ and $+$ is the Boolean sum or symmetric difference.
\end{Definition}
\begin{Convention}
$N_{-1l}(v) = \{c | c \in \mathbb{Z}^X \  \text{and} \ c(x) = 0 \ \text{whenever} \ |x| \neq l \}$.
\end{Convention}

\begin{Definition}
For $x \in X$,  let $\delta_{x} \in \mathbb{Z}_{+}^{X}$ be the indicator function of $x$, i.e.
\[
\delta_x (y) =
\begin{cases}
&1, \quad \text{if}\ y = x,\\
&0, \quad \text{otherwise.}
\end{cases}
\]
\noindent
If $x \subseteq V$, define the \emph{extension of $c$ by $x$} by $c \star \delta_x$.
\end{Definition}
\begin{Definition}
If $q$ and $r$ are distinct points in $V$, let $d_{qr} = \delta_{\{q\}} - \delta_{\{r\}}$. Define the \emph{suspension of $c$ by $q$ and $r$} by $c \star d_{qr}$.
\end{Definition}
\begin{Theorem}[Graver-Jurkat~\cite{GJ72}]
If $c \in C_{t,l_1}(v)$, $d \in C_{t, l_2}(v)$ and $|x \cap y|$ is fixed whenever $c(x)d(y) \neq 0$, then $c \star d \in C_{t, l_3}(v)$
\end{Theorem}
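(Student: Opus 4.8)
The plan is to verify directly the two defining properties of an integral design for $c \star d$: that it has constant block size, and that its transform $\widehat{c\star d}$ is constant on $s$-subsets for every $0 \le s \le t$. Write $k$ for the fixed value of $|x\cap y|$ on the support, and set $l_3 = l_1 + l_2 - 2k$. For the block-size property, note that a nonzero term in $(c\star d)(z) = \sum_{x+y=z} c(x)d(y)$ forces $c(x)\neq 0 \neq d(y)$, hence $|x| = l_1$, $|y| = l_2$ and $|x\cap y| = k$; since $|x + y| = |x| + |y| - 2|x\cap y|$ for the symmetric difference, every $z$ in the support of $c\star d$ has size exactly $l_3$, and $l_3 \le v$ is automatic. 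So $c\star d$ has constant block size $l_3$.

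For the second property I would fix $w$ with $|w| = s \le t$ and expand
\[
\widehat{c\star d}(w) = \sum_{z \supseteq w}\ \sum_{x+y=z} c(x)d(y) = \sum_{x,y:\, w \subseteq x+y} c(x)d(y).
\]
The key observation is that $w \subseteq x+y$ holds precisely when $w \subseteq x \cup y$ and $w \cap x \cap y = \emptyset$, i.e.\ each point of $w$ lies in exactly one of $x,y$. Thus each admissible pair induces a unique ordered partition $w = w_1 \sqcup w_2$ with $w_1 = w \cap x$ and $w_2 = w \cap y$, and conversely, for a fixed ordered partition the conditions separate into $x \supseteq w_1,\ x \cap w_2 = \emptyset$ and $y \supseteq w_2,\ y \cap w_1 = \emptyset$. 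Summing over partitions and factoring gives
\[
\widehat{c\star d}(w) = \sum_{w = w_1 \sqcup w_2}\Bigl(\sum_{x \supseteq w_1,\, x\cap w_2 = \emptyset} c(x)\Bigr)\Bigl(\sum_{y \supseteq w_2,\, y\cap w_1 = \emptyset} d(y)\Bigr),
\]
where the constraint $|x\cap y| = k$ has been dropped because it is automatically satisfied on the support.

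It then remains to show that each inner factor depends only on the sizes $a = |w_1|$ and $b = |w_2|$. I would handle the first factor by inclusion--exclusion on the avoided set $w_2$,
\[
\sum_{x \supseteq w_1,\, x \cap w_2 = \emptyset} c(x) = \sum_{A \subseteq w_2} (-1)^{|A|}\, \hat{c}(w_1 \cup A),
\]
and here is the crucial point where the hypotheses combine: since $w_1$ and $w_2$ are disjoint, $|w_1 \cup A| = a + |A| \le a + b = s \le t$, so every set $w_1 \cup A$ has size at most $t$. Because $c$ is a strength-$t$ design, $\hat{c}(w_1\cup A)$ equals the parameter $\lambda^{c}_{a+|A|}$ of $c$ and depends only on $a+|A|$. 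Hence the first factor equals $\sum_{j=0}^{b}\binom{b}{j}(-1)^j \lambda^{c}_{a+j}$, a function of $a,b$ alone, and symmetrically the $d$-factor depends only on $b,a$. Finally, the number of ordered partitions $w = w_1\sqcup w_2$ with $|w_1| = a$ is $\binom{s}{a}$, which depends only on $s$, so summing over $a$ exhibits $\widehat{c\star d}(w)$ as a function of $s$ alone; this is exactly the design condition, and it identifies $c\star d$ as an integral $(v, l_3, \dots)$-design.

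I expect the main obstacle to be the bookkeeping in this last step: one must arrange the computation so that \emph{only} transform values at subsets of size $\le t$ ever appear. This is the single place where the strength-$t$ hypothesis on both $c$ and $d$ is genuinely used, and it is precisely the restriction $s \le t$ (together with the disjointness $w_1 \cap w_2 = \emptyset$ forcing $|w_1 \cup A| \le s$) that keeps every argument of $\hat{c}$ and $\hat{d}$ within the range where the design parameters are defined. If one is not careful to peel off the avoided set via inclusion--exclusion before invoking the design property, the argument can accidentally require knowledge of $\hat{c}$ on sets of size exceeding $t$, where nothing is assumed.
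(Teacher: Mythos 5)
Your proof is correct, but there is nothing in the paper to compare it against: the paper states this theorem as imported background, citing Graver--Jurkat~\cite{GJ72}, and gives no proof of it. So what you have written is a self-contained proof of a result the paper takes on faith. Your two verifications are exactly what the paper's definition of an integral design demands (constant block size, plus constant transform on all $s$-blocks with $s\le t$), and the logical structure is sound. The identity $|x+y|=l_1+l_2-2|x\cap y|$ for symmetric difference is the only place the fixed-intersection hypothesis is used, and you are right that it is needed nowhere else; it is precisely what makes the block size constant, since otherwise $c\star d$ would merely be a sum of strength-$t$ design components of varying block sizes. The strength computation is also correct: the pairs $(x,y)$ with $w\subseteq x+y$ are in bijection with the data of an ordered partition $w=w_1\sqcup w_2$ together with $x\supseteq w_1$, $x\cap w_2=\emptyset$, $y\supseteq w_2$, $y\cap w_1=\emptyset$, so the double sum factors; and the inclusion--exclusion step $\sum_{x\supseteq w_1,\,x\cap w_2=\emptyset}c(x)=\sum_{A\subseteq w_2}(-1)^{|A|}\hat{c}(w_1\cup A)$ keeps every argument of $\hat{c}$ inside $w$, hence of size at most $t$, which is exactly the range where the design parameters of $c$ (and symmetrically of $d$) exist. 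The final expression for $\widehat{c\star d}(w)$ depends only on $s=|w|$, which is the design condition. One cosmetic point: the theorem as stated in the paper never defines $l_3$; your reading $l_3=l_1+l_2-2k$ is the only one under which the statement is true, and it would be worth saying so explicitly if this proof were incorporated.
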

\begin{Theorem}[Graver-Jurkat~\cite{GJ73}]\label{null-convolution}
Let $-1 \leq t \leq l$, $-1 \leq s \leq h$, $0 \leq h, l \leq v$, $u = \found(c)$ and $w = \found(d)$. If $c \in N_{tl}(v)$, $d \in N_{sh}(v)$ and $u \cap w = \emptyset $, then $c \star d \in N_{t+s+1, k+h}(v).$
\end{Theorem}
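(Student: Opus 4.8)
The plan is to verify directly the two defining properties of $N_{t+s+1,\,l+h}(v)$ for the abbreviation $e := c \star d$: that $e$ has constant block size $l+h$, and that $\hat{e}(z)=0$ for every $z$ with $|z|\leq t+s+1$. (Here $l+h$ is the intended second subscript in the statement.) First I would pin down the support. Since $c$ is supported on $l$-blocks contained in $u=\found(c)$ and $d$ on $h$-blocks contained in $w=\found(d)$, any pair with $c(x)d(y)\neq 0$ has $x\subseteq u$, $y\subseteq w$, $|x|=l$, $|y|=h$. The hypothesis $u\cap w=\emptyset$ forces $x\cap y=\emptyset$, so the Boolean sum $x+y$ is the ordinary disjoint union $x\cup y$, of size $l+h$. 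Hence $e(z)=0$ unless $|z|=l+h$, which gives the constant block size.

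The key step, and the one I expect to carry the whole argument, is a factorisation of the transform of the convolution under the disjointness hypothesis. I claim that for every $z\in X$,
\[
\hat{e}(z) = \begin{cases} \hat{c}(z\cap u)\,\hat{d}(z\cap w), & z\subseteq u\cup w,\\ 0, & \text{otherwise.}\end{cases}
\]
To prove this I would expand $\hat{e}(z)=\sum_{z'\supseteq z} e(z')=\sum_{x,\,y\,:\,x+y\supseteq z} c(x)d(y)$ and again restrict to $x\subseteq u$, $y\subseteq w$, where $x+y=x\cup y$. If $z$ meets the complement of $u\cup w$, no such union can contain $z$, so the sum is empty. Otherwise $z=(z\cap u)\cup(z\cap w)$ is a disjoint union of a subset of $u$ and a subset of $w$; since $u$ and $w$ are disjoint, the covering condition $x\cup y\supseteq z$ decouples into $x\supseteq z\cap u$ and $y\supseteq z\cap w$, and the double sum factors as $\left(\sum_{x\supseteq z\cap u} c(x)\right)\left(\sum_{y\supseteq z\cap w} d(y)\right)=\hat{c}(z\cap u)\,\hat{d}(z\cap w)$, as claimed.

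Finally I would close the argument with a pigeonhole count on sizes. Take $|z|\leq t+s+1$. If $z\not\subseteq u\cup w$ then $\hat{e}(z)=0$ by the factorisation. Otherwise set $z_u=z\cap u$ and $z_w=z\cap w$, so that $|z_u|+|z_w|=|z|\leq t+s+1$. If both $\hat{c}(z_u)\neq 0$ and $\hat{d}(z_w)\neq 0$, then the defining vanishing conditions of $N_{tl}(v)$ and $N_{sh}(v)$ give $|z_u|\geq t+1$ and $|z_w|\geq s+1$, forcing $|z|\geq t+s+2$, a contradiction. Hence at least one factor vanishes and $\hat{e}(z)=0$. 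Combined with the constant block size, this shows $c\star d\in N_{t+s+1,\,l+h}(v)$.

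I expect the factorisation of the transform to be the only delicate point, since that is exactly where the disjoint-foundation hypothesis is consumed; the rest is bookkeeping on supports and a size inequality. The boundary conventions $t=-1$ or $s=-1$, where the $\hat{c}$- or $\hat{d}$-vanishing condition is vacuous, require no separate treatment: the implications $\hat{c}(z_u)\neq 0\Rightarrow|z_u|\geq t+1$ and $\hat{d}(z_w)\neq 0\Rightarrow|z_w|\geq s+1$ hold trivially in those cases, so the same inequality $|z|\geq t+s+2$ produces the contradiction.
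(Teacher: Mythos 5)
Your proof is correct, and it is complete on its own terms. One point of comparison worth making explicit: the paper itself offers no proof of this statement --- it is quoted from Graver--Jurkat~\cite{GJ73} as background for the construction in Theorem~\ref{mainDesign} --- so there is no in-paper argument to measure yours against; what you have reconstructed is essentially the original Graver--Jurkat argument. Its engine is exactly the factorisation you isolate: when $\found(c)\cap\found(d)=\emptyset$, the transform is multiplicative,
\[
\widehat{(c\star d)}(z)=\hat{c}(z\cap u)\,\hat{d}(z\cap w)\quad\text{for }z\subseteq u\cup w,\qquad \widehat{(c\star d)}(z)=0\ \text{otherwise},
\]
and your derivation of it (restricting to supported pairs, replacing the Boolean sum by disjoint union, and decoupling the covering condition $x\cup y\supseteq z$ into $x\supseteq z\cap u$, $y\supseteq z\cap w$ using $u\cap w=\emptyset$) is watertight. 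The size pigeonhole then correctly delivers the vanishing for $|z|\leq t+s+1$, and your remark that the boundary conventions $t=-1$ or $s=-1$ need no special treatment is right, since the implication $\hat{c}(z_u)\neq 0\Rightarrow |z_u|\geq t+1$ is vacuously true there. You also correctly read the subscript $k+h$ in the statement as a typo for $l+h$; with $k$ as printed the claim would not even parse, since $k$ is undefined in that context.
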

\begin{Corollary}[Graver-Jurkat~\cite{GJ73}]\label{suspension}
If $c \in N_{tl}(v)$, $|x| = h$ and $\found(c) \cap x =\emptyset$, then $c \star \delta_x \in N_{t,l+1}(v)$. Also, if $\found(c) \cap \{q, r \} = \emptyset$, then $c \star d_{qr} \in N_{t+1,l+1}(v)$.
\end{Corollary}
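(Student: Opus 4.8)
The plan is to derive both assertions directly from Theorem~\ref{null-convolution}, by recognising $\delta_x$ and $d_{qr}$ as null designs with suitably degenerate parameters and then reading off the resulting indices. The substance of the corollary is entirely contained in that theorem; what remains is bookkeeping of the parameters of the second convolution factor.

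First I would treat the extension statement. The key observation is that $\delta_x$ has constant block size $h = |x|$ and is supported on the single block $x$, so by the \textbf{Convention} it lies in $N_{-1,h}(v)$; note that here we genuinely need the index $-1$, since $\hat{\delta_x}(y) = 1$ for every $y \subseteq x$, so $\delta_x$ satisfies no honest vanishing condition and is not a null design in the literal sense. Its foundation is $\found(\delta_x) = x$, which by hypothesis is disjoint from $\found(c)$. Applying Theorem~\ref{null-convolution} to the pair $(c,\delta_x) \in N_{tl}(v) \times N_{-1,h}(v)$ with disjoint foundations then yields
\[
c \star \delta_x \in N_{t + (-1) + 1,\, l + h}(v) = N_{t,\, l+h}(v),
\]
which specialises to the stated $N_{t,l+1}(v)$ in the single-point case $h = 1$.

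For the suspension statement I would argue analogously. The element $d_{qr} = \delta_{\{q\}} - \delta_{\{r\}}$ has constant block size $1$, and since $\widehat{d_{qr}}(\emptyset) = 1 - 1 = 0$ while $\widehat{d_{qr}}$ is nonzero on the singletons $\{q\}$ and $\{r\}$, it lies in $N_{0,1}(v)$ and no better. Its foundation is $\{q,r\}$, disjoint from $\found(c)$ by hypothesis. Applying Theorem~\ref{null-convolution} to $(c,d_{qr}) \in N_{tl}(v) \times N_{0,1}(v)$ gives
\[
c \star d_{qr} \in N_{t + 0 + 1,\, l + 1}(v) = N_{t+1,\, l+1}(v),
\]
as required.

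The main subtlety — the step most likely to trip one up — is the correct use of the degenerate index for the second factor: because $\delta_x$ is not a null design in the literal sense, one must invoke $s = -1$ to place it in the framework, and it is precisely the ``$+1$'' in the index shift $t + s + 1$ of Theorem~\ref{null-convolution} that then produces the right first index in each case. A secondary point worth verifying explicitly is the block-size computation: since $\found(c) \cap x = \emptyset$, every block $a \in \supp(c)$ is disjoint from $x$, so $(c \star \delta_x)(z) = c(z + x)$ is supported exactly on the blocks $z = a \cup x$ of size $l + h$, confirming the second index of the conclusion (and showing that a literal reading of ``$l+1$'' forces $h=1$).
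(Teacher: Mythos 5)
Your proposal is correct and is precisely the intended derivation: the paper offers no separate argument for this corollary, presenting it as an immediate consequence of Theorem~\ref{null-convolution} together with the Convention placing constant-block-size vectors in $N_{-1,h}(v)$, which is exactly the bookkeeping you carry out (with $s=-1$ for $\delta_x$ and $s=0$ for $d_{qr}$). Your observation that the hypothesis $|x|=h$ forces the conclusion $N_{t,l+h}(v)$, so that the stated $N_{t,l+1}(v)$ implicitly assumes $h=1$, correctly identifies a slip in the statement as transcribed.
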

\begin{Definition}
Let $W = \{1, 2, \dots, , v-1 \}$ and $Y = \{x \mid x \subseteq W \}$ where $v >1$. Define $\phi : \mathbb{Z}^X \to \mathbb{Z}^Y$ by $(\phi c)(x) = c(x + v)$, where $x \in Y$ and  $v$ denotes $\{v\}$.

\end{Definition}
\begin{Lemma}[Graver-Jurkat~\cite{GJ73}]\label{map}
$\phi$ commutes with $G$:
\[
G(v-1)\phi = \phi G(v),
\]
and
\begin{align*}
\phi(C_{tl}(v)) &\subseteq C_{t-1,l-1}(v),\\
\phi |_{N_{tl}(v)}(N_{tl}(v)) &= N_{t-1,l-1}(v).
\end{align*}
\end{Lemma}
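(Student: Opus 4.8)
The plan is to treat the three assertions in turn, reserving the real work for the surjectivity in the last one. I would first dispose of the commutation $G(v-1)\phi = \phi G(v)$ by a direct double count. Fix $c \in \mathbb{Z}^X$ and $u \in Y$, so $u \subseteq W$. On one side,
\[
(G(v-1)\phi c)(u) = \sum_{x \in Y,\, x \supseteq u} (\phi c)(x) = \sum_{u \subseteq x \subseteq W} c(x \cup \{v\}).
\]
On the other side, since $u \cup \{v\} \in X$,
\[
(\phi G(v) c)(u) = (G(v)c)(u \cup \{v\}) = \sum_{z \supseteq u \cup \{v\}} c(z) = \sum_{u \subseteq x \subseteq W} c(x \cup \{v\}),
\]
the last step by writing each $z \supseteq u\cup\{v\}$ uniquely as $z = x\cup\{v\}$ with $x = z\setminus\{v\}\subseteq W$ and $x \supseteq u$. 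The two expressions coincide.

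Next I would read off the two inclusions from this identity. If $c \in C_{tl}(v)$, then $c$ is supported on $l$-blocks, so $(\phi c)(x) = c(x\cup\{v\})$ vanishes unless $|x| = l-1$; hence $\phi c$ has constant block size $l-1$. Moreover, for $|u| = s \le t-1$ the identity gives $\widehat{\phi c}(u) = \hat c(u\cup\{v\}) = \lambda_{s+1}$, which depends only on $s$. Thus $\phi c$ is an integral $(v-1, l-1, \lambda_1, \dots, \lambda_t)$-design, i.e.\ $\phi c \in C_{t-1,l-1}(v-1)$; and when all $\lambda_i$ vanish so do all the shifted parameters, giving $\phi(N_{tl}(v)) \subseteq N_{t-1,l-1}(v-1)$. (I read the "$(v)$" in the statement as a misprint for "$(v-1)$", since $\phi c$ lives on subsets of the $(v-1)$-set $W$.)

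The crux is the reverse inclusion, i.e.\ surjectivity $\phi(N_{tl}(v)) \supseteq N_{t-1,l-1}(v-1)$ over $\mathbb{Z}$, and here the strategy is to exhibit explicit preimages by suspension. Given $d \in N_{t-1,l-1}(v-1)$, extend it by zero to $\mathbb{Z}^X$; one checks it then lies in $N_{t-1,l-1}(v)$ with $\found(d) \subseteq W$. If there is a point $w \in W \setminus \found(d)$, then $\found(d) \cap \{v,w\} = \emptyset$, so by Corollary~\ref{suspension} the suspension $c := d \star d_{vw} = d\star\delta_{\{v\}} - d\star\delta_{\{w\}}$ lies in $N_{t,l}(v)$. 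A short computation with the Boolean-sum convolution shows $\phi(d\star\delta_{\{v\}}) = d$ (the mass of $d$ is carried onto the blocks containing $v$, exactly those that $\phi$ reads off) while $\phi(d\star\delta_{\{w\}}) = 0$ (its support meets the blocks containing $v$ only at sets where $d$ already vanishes). Hence $\phi c = d$, the required preimage.

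The main obstacle is precisely the case $\found(d) = W$, where no free point $w$ exists; this occurs at the boundary $l = v-t$, where the minimal foundation size $t+l-1$ of a null design in $N_{t-1,l-1}(v-1)$ already equals $|W|$ by Lemma~\ref{foundationSize}. To resolve it I would exploit that the image $\phi(N_{tl}(v))$ is a subgroup, so it suffices that it contain a generating set of $N_{t-1,l-1}(v-1)$: I would express an arbitrary $d$ as a $\mathbb{Z}$-combination of null designs of strictly smaller foundation (each admitting a free point, hence a suspension lift), using the Graver--Jurkat generation theory, or else induct on $v$, first manufacturing a free point by an extension and then peeling it off. As a consistency check that only this integral refinement is at issue, I note that $\ker(\phi|_{N_{tl}(v)}) = N_{tl}(v-1)$ (the null designs supported inside $W$), so Corollary~\ref{dim} together with Pascal's identity gives image rank $\binom{v-1}{l-1} - \binom{v-1}{t-1} = \operatorname{rank} N_{t-1,l-1}(v-1)$; the map is thus onto over $\mathbb{Q}$, and it is exactly the triviality of the cokernel over $\mathbb{Z}$ that the explicit suspension lifts are meant to secure.
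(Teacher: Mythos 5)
The paper itself offers no proof of this lemma---it is imported from Graver--Jurkat~\cite{GJ73}---so your argument has to stand on its own, and most of it does. The double count proving $G(v-1)\phi = \phi G(v)$, the observation that $\phi c$ has constant block size $l-1$ with shifted parameters $\lambda_1,\dots,\lambda_t$, and your reading of ``$(v)$'' as a misprint for ``$(v-1)$'' are all correct. The suspension lift is also right: for $d \in N_{t-1,l-1}(v-1)$ extended by zero we have $\found(d)\cap\{v,w\}=\emptyset$, so $d \star d_{vw} \in N_{t,l}(v)$ by Corollary~\ref{suspension}, and your convolution computations $\phi(d\star\delta_{\{v\}}) = d$ and $\phi(d\star\delta_{\{w\}}) = 0$ are exact. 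Combined with Theorem~\ref{nullBasis} (legitimate to quote, since the paper imports it as known independently of this lemma), this already proves surjectivity whenever $l < v-t$: each $(t-1),(l-1)$-pod in a basis of $N_{t-1,l-1}(v-1)$ has foundation of size $t+l-1 < v-1 = |W|$, hence admits a free point and lifts; the image is a subgroup containing a $\mathbb{Z}$-basis, so it is everything.

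The gap is in your final paragraph, where you conflate two different difficulties and your proposed fix addresses only the harmless one. For $l < v-t$ an individual $d$ can certainly have $\found(d)=W$ (a sum of pods covering $W$), but that is exactly what the pod decomposition cures, so nothing is left open in that range. The genuine boundary is $l = v-t$, where by Lemma~\ref{foundationSize} \emph{every} nonzero element of $N_{t-1,l-1}(v-1)$ has foundation $W$---and there no generation or induction trick can succeed, because the asserted equality is false. Concretely, take $t=1$, $l=4$, $v=5$: writing $c_j$ for the coefficient of the $4$-set $V\setminus\{j\}$, the null conditions $\hat{c}(\emptyset)=\hat{c}(\{i\})=0$ force $c_i = \sum_j c_j - \sum_{j\neq i}c_j = 0$ for every $i$, so $N_{1,4}(5)=0$; yet $N_{0,3}(4)$ (integer weights on the four $3$-subsets of $W$ summing to zero) is free of rank $3$, so $\phi(N_{1,4}(5)) = 0 \subsetneq N_{0,3}(4)$. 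The lemma therefore carries the implicit hypothesis $t < l < v-t$ under which Graver--Jurkat work (compare Definition~\ref{pod} and Theorem~\ref{surject}); with that hypothesis your own argument is complete as it stands, and without it no argument can close the case you flagged. Note finally that your rank ``consistency check'' silently assumes the same restriction: Corollary~\ref{dim} does not apply when $l \geq v-t$, where $G_{tl}(v)$ has full column rank and $N_{tl}(v)=0$, which is exactly why the check appears to confirm a statement that fails at the boundary.
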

\begin{Definition}[Graver-Jurkat~\cite{GJ73}]\label{pod}
Assume that $t < l < v-t$. Let $u$ be any $(l+t+1)$-subset of $V$. Label $2t+2$ of the points in $u$ as $p_0, p_1, \dots, p_t, q_0, q_1, \dots, q_t$ and let $x$ be the subset of the $l - t - 1$ remaining points in $u$. $d$ is a \emph{$t,l$-pod}  if
\[
d = d_{p_0q_0} \star \dots \star d_{p_t q_t} \star \delta_x.
\]
\end{Definition}
\begin{Theorem}[Graver-Jurkat~\cite{GJ73}]\label{nullBasis}
For $0 \leq t, l \leq v$ and $v \geq 1$, $N_{tl}(v)$ has a module basis consisting of designs with foundation size $l + t + 1$; in fact it has a module basis consisting of t,l-pods.
\end{Theorem}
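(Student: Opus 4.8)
The plan is to prove the statement by induction on $v$, peeling off the last point via the map $\phi$ of Lemma~\ref{map} so as to reduce to smaller ground sets. First I would record that every pod genuinely lies in $N_{tl}(v)$ and has foundation size exactly $t+l+1$. Starting from $d_{p_0q_0}\in N_{0,1}(v)$ and repeatedly applying the suspension and extension operations of Corollary~\ref{suspension} — convolving with the disjointly supported $d_{p_iq_i}$ and finally with $\delta_x$ — shows $d\in N_{tl}(v)$, while each step enlarges the foundation by the expected amount, giving $|\found(d)| = 2(t+1) + (l-t-1) = t+l+1$, the minimum permitted by Lemma~\ref{foundationSize}. It then remains to produce the correct number of pods and to show they form a $\mathbb{Z}$-basis.

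Next I would dispose of the degenerate ranges. If $l\le t$ with $l<t$ then $N_{tl}(v)=0$, since evaluating $\hat c$ on an $l$-set in the support forces $c=0$, and no pod exists because $l-t-1<0$; if $t+l>v$ then $N_{tl}(v)=0$ by the foundation bound of Lemma~\ref{foundationSize} and no pod can fit inside $V$; and if $t+l=v$ then $\dim N_{tl}(v) = \binom{v}{l}-\binom{v}{t}=0$ by Corollary~\ref{dim}. Hence the statement is vacuous outside the range $t\le l\le v-t-1$, to which I restrict from now on.

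The heart of the argument is the short exact sequence
\[
0 \longrightarrow N_{tl}(v-1) \overset{\iota}{\longrightarrow} N_{tl}(v) \overset{\phi}{\longrightarrow} N_{t-1,l-1}(v-1) \longrightarrow 0,
\]
where $\iota$ includes the designs supported on subsets of $W=\{1,\dots,v-1\}$. Surjectivity of $\phi$ and the identification of its kernel with $N_{tl}(v-1)$ both come from Lemma~\ref{map}: a null design supported away from $v$ is killed by $\phi$ and satisfies the $N_{tl}(v-1)$ conditions, and conversely every $v$-free null design lies in the kernel. By the inductive hypothesis at level $v-1$, I choose a pod basis $B_1$ of the kernel $N_{tl}(v-1)$, whose elements are $t,l$-pods on the points of $W$ and hence $t,l$-pods for $V$, together with a pod basis $B_2$ of the quotient $N_{t-1,l-1}(v-1)$. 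For each $c'\in B_2$, a $(t-1,l-1)$-pod whose foundation has size $t+l-1\le v-2$, I pick a point $p\in W$ outside $\found(c')$ — such $p$ exists precisely because $t+l<v$ — and form the lift $c'\star d_{vp}$. A direct computation with the convolution gives $\phi(c'\star d_{vp}) = \phi(c'\star\delta_{\{v\}}) - \phi(c'\star\delta_{\{p\}}) = c'-0 = c'$, and by construction $c'\star d_{vp}$ is a $t,l$-pod, having $t+1$ suspension factors and an extension set of size $l-t-1$ with all distinguished points distinct. Since a short exact sequence of free $\mathbb{Z}$-modules splits, $B_1$ together with these lifts is a $\mathbb{Z}$-basis of $N_{tl}(v)$ consisting of $t,l$-pods, and the count $\big(\binom{v-1}{l}-\binom{v-1}{t}\big)+\big(\binom{v-1}{l-1}-\binom{v-1}{t-1}\big)=\binom{v}{l}-\binom{v}{t}$ matches $\dim N_{tl}(v)$ from Corollary~\ref{dim}. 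The induction is anchored by $v=1$, where all relevant null modules vanish, and by the convention $N_{-1,l}(v)$, whose pods are exactly the singletons $\delta_x$ with $|x|=l$.

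I expect the main obstacle to be the lift step: checking cleanly that convolving a $(t-1,l-1)$-pod $c'$ with $d_{vp}$ both recovers $c'$ under $\phi$ and is itself a pod. This rests on pinning down how $\phi$ interacts with the convolution $\star$ and on guaranteeing a fresh point $p$, the latter being exactly why the substantive range is $t+l<v$ rather than $t+l\le v$. The remaining assertion, that the kernel basis together with the lifted quotient basis is a basis of the total module, is the standard splitting of a short exact sequence of free modules and is routine once the sequence and the lifts are in hand.
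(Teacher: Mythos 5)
The paper contains no proof of Theorem~\ref{nullBasis}: it is stated as a quoted result of Graver--Jurkat~\cite{GJ73}, so there is no in-paper argument to compare yours against. Judged on its own, your proof is correct, and it reconstructs what is essentially Graver--Jurkat's original induction on $v$. The key verifications all go through: pods lie in $N_{tl}(v)$ with foundation of the minimal size $t+l+1$; the degenerate ranges $l\le t$ and $t+l\ge v$ are indeed vacuous; the sequence $0\to N_{tl}(v-1)\to N_{tl}(v)\xrightarrow{\phi} N_{t-1,l-1}(v-1)\to 0$ is exact, the kernel identification being the elementary observation that a null design whose support avoids the point $v$ is precisely a null design on $W$ extended by zero; your lift $c'\mapsto c'\star d_{vp}$ (with $p$ a fresh point of $W$) satisfies $\phi(c'\star d_{vp})=c'$, carries $(t-1,l-1)$-pods to $t,l$-pods, and hence splits the sequence since the quotient is free; and the resulting count matches Corollary~\ref{dim} by Pascal's rule. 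Three small points are worth tightening. First, Corollary~\ref{suspension} as printed asserts $c\star\delta_x\in N_{t,l+1}(v)$ for $|x|=h$, which is a typo for $N_{t,l+h}(v)$; for the extension step in your pod verification it is cleaner to invoke Theorem~\ref{null-convolution} directly with $\delta_x\in N_{-1,h}(v)$. Second, Lemma~\ref{map} as printed also has a typo ($N_{t-1,l-1}(v)$ should read $N_{t-1,l-1}(v-1)$); happily, your explicit lifts prove surjectivity of $\phi$ on null designs on their own, so you only need the containment $\phi(N_{tl}(v))\subseteq N_{t-1,l-1}(v-1)$ and the kernel computation, both of which are elementary. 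Third, your induction hypothesis must cover the quotient parameter $t-1=-1$ arising when $t=0$; you handle this correctly through the paper's Convention, under which $N_{-1,l-1}(v-1)$ is free on the singleton designs $\delta_x$ with $|x|=l-1$, which are exactly the $(-1,l-1)$-pods.
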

\begin{Theorem}[Graver-Jurkat~\cite{GJ73}]\label{surject}
Let $0 \leq t < l < v-t$. Then $G_{t+1,l}(N_{t,l}) = N_{t,t+1}$.
\end{Theorem}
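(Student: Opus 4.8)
The plan is to prove the two inclusions separately, obtaining the surjectivity $N_{t,t+1} \subseteq G_{t+1,l}(N_{t,l})$ by writing down an explicit right inverse on a basis; this is what is really needed, since the statement is an equality of $\mathbb{Z}$-modules and not merely of ranks.

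First I would dispose of the inclusion $G_{t+1,l}(N_{t,l}) \subseteq N_{t,t+1}$. Any $G_{t+1,l}c$ is by construction supported on $(t+1)$-blocks, so I only need to check that its $s$-transform vanishes for $s \leq t$. Applying the matrix identity of Lemma~\ref{matrixId} with $h = t+1$ gives $G_{s,t+1}G_{t+1,l} = \binom{l-s}{t+1-s}G_{sl}$ for $s \leq t+1 \leq l$. Hence for $c \in N_{t,l}$ and $s \leq t$ we get $G_{s,t+1}(G_{t+1,l}c) = \binom{l-s}{t+1-s}G_{sl}c = 0$, because $G_{sl}c = (\hat c)_s = 0$ for $s \leq t$. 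This shows $G_{t+1,l}c \in N_{t,t+1}$.

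For the reverse inclusion I would use the pod basis. By Theorem~\ref{nullBasis}, $N_{t,t+1}$ has a $\mathbb{Z}$-basis consisting of $t,(t+1)$-pods $D = d_{p_0q_0}\star\cdots\star d_{p_tq_t}$, whose foundation is the $2t+2$ points $p_0,\dots,p_t,q_0,\dots,q_t$. For each such $D$ I propose the preimage $c = D\star\delta_x$, where $x$ is any $(l-t-1)$-subset of $V$ disjoint from $\found(D)$; such an $x$ exists because the hypothesis $l < v-t$ forces $v \geq l+t+1 = (2t+2)+(l-t-1)$. By construction $c$ is a $t,l$-pod (Definition~\ref{pod}), so $c \in N_{t,l}$ by Theorem~\ref{nullBasis} (equivalently, iterate the extension part of Corollary~\ref{suspension}). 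Sending $D \mapsto D\star\delta_x$ and extending $\mathbb{Z}$-linearly defines a section $\sigma: N_{t,t+1}\to N_{t,l}$, provided I can verify $G_{t+1,l}(D\star\delta_x) = D$ on each basis pod; this gives $G_{t+1,l}\circ\sigma = \mathrm{id}$ and hence surjectivity onto $N_{t,t+1}$.

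The crux, and the main obstacle, is the identity $G_{t+1,l}(D\star\delta_x) = D$, which I expect to be a sign-cancellation computation. Expanding the convolution, $c = D\star\delta_x$ is the signed sum over the $2^{t+1}$ sets $S_\epsilon \cup x$, where $\epsilon\in\{0,1\}^{t+1}$, $S_\epsilon$ picks $p_i$ or $q_i$ according to $\epsilon_i$, and the coefficient is $(-1)^{|\epsilon|}$; each such set has size $l$. Evaluating $G_{t+1,l}c$ at a $(t+1)$-block $w$ means summing $(-1)^{|\epsilon|}$ over those $\epsilon$ with $w \subseteq S_\epsilon \cup x$, i.e.\ with $w\setminus x \subseteq S_\epsilon$. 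For each pair $\{p_i,q_i\}$ not met by $w\setminus x$ the index $\epsilon_i$ is free, and summing $(-1)^{\epsilon_i}$ over $\epsilon_i\in\{0,1\}$ gives $0$; so the total vanishes unless $w\setminus x$ meets all $t+1$ pairs, which forces $w = S_\epsilon$ for a unique $\epsilon$ and contributes exactly $(-1)^{|\epsilon|}$. This is precisely $D(w)$, so $G_{t+1,l}c = D$ as claimed. The remaining points --- existence of the disjoint set $x$, membership $c \in N_{t,l}$, and $\mathbb{Z}$-linearity of $\sigma$ --- are routine once the cancellation identity is in hand.
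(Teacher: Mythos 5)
Your proof is correct. There is, however, nothing in the paper to compare it against: the paper states this theorem as a quoted result of Graver--Jurkat~\cite{GJ73} and gives no proof, using it as a black box in the proof of Theorem~\ref{mainDesign}. Your argument is a genuine, self-contained proof built only from the other quoted results, and both halves check out. The inclusion $G_{t+1,l}(N_{t,l}) \subseteq N_{t,t+1}$ via Lemma~\ref{matrixId} with $h=t+1$ is exactly the reasoning that justifies the paper's unproved step ``it is clear that $d' \in N_{t,t+1}$'' inside the proof of Theorem~\ref{mainDesign}. For surjectivity, your reduction to the pod basis of Theorem~\ref{nullBasis} is legitimate (a $t,(t+1)$-pod is precisely $d_{p_0q_0}\star\cdots\star d_{p_tq_t}$, since the residual set $x$ in Definition~\ref{pod} is empty when $l$ is replaced by $t+1$), the existence of the disjoint $(l-t-1)$-set $x$ uses $l < v-t$ exactly where needed, and the cancellation identity $G_{t+1,l}(D\star\delta_x)=D$ is established correctly by your factorization: each pair $\{p_i,q_i\}$ missed by $w\setminus x$ contributes a factor $1+(-1)=0$, so only $w=S_\epsilon$ survives, with coefficient $(-1)^{|\epsilon|}$ and no extraneous multiplicative constant. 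Two minor notes: membership $D\star\delta_x\in N_{t,l}$ is immediate because $D\star\delta_x$ \emph{is} a $t,l$-pod in the sense of Definition~\ref{pod}, so you do not need Corollary~\ref{suspension} at all --- which is just as well, since that corollary as printed in the paper contains a typo (extension by an $h$-set should land in $N_{t,l+h}$, not $N_{t,l+1}$), making your fallback of iterating one-point extensions the safer citation; and the linearity argument for the section $\sigma$ is indeed routine, since $G_{t+1,l}\circ\sigma$ is $\mathbb{Z}$-linear and agrees with the identity on a basis.
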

\begin{Theorem}[Graver-Jurkat~\cite{GJ73}, Wilson~\cite{W73}]\label{mainDesign}
Let $t, v, l, \lambda_0, \dots \lambda_t$ be integers where $v \geq 1$ and $0\leq t, l \leq v$. There exists an integral $(v, l, \lambda_0,\dots, \lambda_t)$-design if and only if
\[
\lambda_{s+1} = \frac{l-s}{v-s}\lambda_s,\quad \text{for} \ 0\leq s <t.
\]
\end{Theorem}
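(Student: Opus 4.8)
This restates Theorem~\ref{design}, so the plan is to prove both implications cleanly using the machinery assembled above. The necessity (``only if'') is immediate: an integral $(v,l,\lambda_0,\dots,\lambda_t)$-design $c$ is by definition supported on $l$-blocks with $(\hat c)_t=\lambda_t e_t$, so Theorem~\ref{fundamental} forces $\lambda_s=\binom{v-s}{t-s}\big/\binom{l-s}{t-s}\,\lambda_t$, and a one-line binomial cancellation shows this satisfies $\lambda_{s+1}=\tfrac{l-s}{v-s}\lambda_s$. Equivalently, one may fix an $s$-block $y$ and count $\sum_{z\supseteq y,\,|z|=s+1}\hat c(z)$ in two ways: grouping by $z$ gives $(v-s)\lambda_{s+1}$, while grouping by the $l$-blocks $x\supseteq y$ gives $(l-s)\lambda_s$, since each such $x$ contains exactly $l-s$ of the relevant $z$.

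For sufficiency (``if'') I would induct on $t$, for fixed $v$ and $l$ in the principal range $t\le l\le v-t$. The base case $t=0$ is trivial, since $c=\lambda_0\delta_x$ for any single $l$-block $x$ realizes an arbitrary integer $\lambda_0$. For the inductive step, given consistent integers $\lambda_0,\dots,\lambda_t$, I apply the induction hypothesis to the truncation $\lambda_0,\dots,\lambda_{t-1}$ to obtain an integral $(v,l,\lambda_0,\dots,\lambda_{t-1})$-design $c$; this $c$ is correct on all blocks of size $\le t-1$, but its transform need not be constant on $t$-blocks. The idea is to repair it by adding a null design of level $t-1$. Set
\[
b \;=\; \lambda_t e_t-(\hat c)_t \in \mathbb{Z}^{V_t},
\]
and compute, for an $s$-block $y$ with $s\le t-1$, that $\hat b(y)=\lambda_t\binom{v-s}{t-s}-\binom{l-s}{t-s}\lambda_s$. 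The consistency hypothesis (in the form of the Fundamental Theorem~\ref{fundamental}) makes this vanish for every such $y$, so $b\in N_{t-1,t}(v)$.

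Now Theorem~\ref{surject}, applied with its index shifted down by one, gives $G_{t,l}\big(N_{t-1,l}(v)\big)=N_{t-1,t}(v)$, so there is an integral null design $n\in N_{t-1,l}(v)$ with $G_{t,l}n=b$. Then $c'=c+n$ is integral, supported on $l$-blocks, satisfies $(\hat{c'})_s=\lambda_s e_s$ for $s\le t-1$ (because $n$ is null through level $t-1$) and $(\hat{c'})_t=(\hat c)_t+b=\lambda_t e_t$; hence $c'$ is the desired integral $(v,l,\lambda_0,\dots,\lambda_t)$-design, completing the induction. The degenerate ranges I would dispose of separately: when $l<t$, consistency forces $\lambda_{l+1}=\dots=\lambda_t=0$ and the problem collapses to the case $t=l$, while the range $l>v-t$ reduces to the case $l\le v-t$ by the complementation symmetry $x\mapsto V\setminus x$.

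The genuine content---and the main obstacle---is entirely hidden inside Theorem~\ref{surject}: the reduction above is bookkeeping once one knows that every level-$(t-1)$ null design on $t$-blocks lifts to an integral level-$(t-1)$ null design on $l$-blocks. Establishing that surjectivity \emph{over $\mathbb{Z}$} (as opposed to over $\mathbb{Q}$, where it is routine linear algebra) is where the explicit structure of null designs is needed: the basis of $t,l$-pods from Theorem~\ref{nullBasis}, together with the extension/suspension calculus of Corollary~\ref{suspension} and the reduction map $\phi$ of Lemma~\ref{map}. I would expect to prove Theorem~\ref{surject} by inducting along $\phi$, lifting a pod basis of $N_{t-1,t}(v)$ to pods in $N_{t-1,l}(v)$ by repeated extension, with Corollary~\ref{dim} confirming the rank count. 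Everything else in the argument is a consistency check of the index ranges and of the binomial identity for $\hat b(y)$.
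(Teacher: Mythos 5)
Your proposal is correct and is essentially the paper's own argument: necessity via Theorem~\ref{fundamental} (your double-counting identity $(v-s)\lambda_{s+1}=(l-s)\lambda_s$ is an equivalent one-liner), and sufficiency by induction on $t$ in the range $t\le l\le v-t$, repairing the top-level defect by a null design supplied by the surjectivity Theorem~\ref{surject} --- exactly the paper's inductive step, up to an index shift, a sign, and the fact that you verify the defect lies in $N_{t-1,t}(v)$ by a direct binomial computation where the paper invokes Lemma~\ref{matrixId}. The only genuine divergence is peripheral: for the degenerate ranges $l<t$ and $l>v-t$ the paper simply observes that $C_{tl}(v)$ is one-dimensional, spanned by $e_l$, and checks the forced value of the new parameter, whereas you reduce to the principal range by truncation and by the complementation $x\mapsto V\setminus x$ --- sound, but sketchier, since it leaves unverified that complementation carries integral designs to integral designs and consistency conditions to consistency conditions.
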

\begin{proof}
The necessary conditions for $t \leq l$ follow from Theorem~\ref{fundamental}. If $t >l$, $\lambda_s = 0$ by definition for $l < s \leq t$.

We will prove the sufficient conditions by induction on $v$. If $t = 0$, the set of conditions is vacuous and $\lambda_0\delta_x$ is a $(v, l, \lambda_0)$-design for $|x| = l$. Assume that the conditions are sufficient for some $t \geq 0$, and that $\lambda_0, \dots, \lambda_{t+1}$ satisfy the conditions. Then there exists an integral $(v, l, \lambda_0, \dots, \lambda_t)$-design $c'$. We would like to alter $c'$ so as to make it an integral $(v, l, \lambda_0, \dots, \lambda_{t+1})$-design.

If $l < t$ or $l > v-t$, then $C_{tl}(v)$ is a one-dimensional space spanned by $e_l$. In this case, $c' = \alpha e_l$, and hence is a $(v, l, \lambda_0, \dots, \lambda_t, \lambda_{t+1}')$-design. We need only check that $\lambda_{t+1} = \lambda_{t+1}'$, which is a straightforward computation.

Now we turn to the case $t < l < v-t$. We have
\[
G_{tl}c_l' = \lambda_t e_t. \label{one} \tag{1}
\]
By Lemma~\ref{matrixId},
\[
G_{tl} = \frac{1}{l-t}G_{t,t+1}G_{t+1,l},
\]
\noindent
and one may easily compute that 
\[
e_t = \frac{1}{v-t}G_{t,t+1}e_{t+1}.
\]
\noindent
Substituting these into \eqref{one} yields
\[
G_{t,t+1}G_{t+1,l}c_l' = G_{t,t+1}\frac{l-t}{v-t}\lambda_t e_{t+1} = G_{t,t+1}\lambda_{t+1}e_{t+1}.
\]
\noindent
Let $d'$ be the extension by zero of $(G_{t+1,l}c_l' - \lambda_{t+1}e_{t+1})$, it is clear that $d' \in N_{t,t+1}$. By Theorem~\ref{surject}, there exist $d \in N_{tl}$ such that $G_{t+1,l}d_l = d_{t+1}'$. Finally, if $c = c' -d$, then $c$ has constant block size $l$ and
\[
G_{t+1,l}c_l = G_{t+1,l}c_l' - d_{t+1}' = \lambda_{t+1}e_{t+1}.
\]
\noindent
It follows from Theorem~\ref{fundamental} that $c$ is a $(v, l, \lambda_0, \dots, \lambda_t, \lambda_{t+1})$-design.
\end{proof}
Turning our attention to the theory of Specht module cohomology, we can now state and prove our main theorem for $2$-part partitions:
\begin{Theorem}\label{main}
Over an algebraically closed field $k$ of odd characteristic, we can construct an $u \in M^{(a, b)}$, such that for each 
\begin{align*}
\psi_{i,v}: &M^{(a, b)} \to M^{(a+b-v, v)}\\
                & \overline{i_1 \dots i_b} \mapsto \sum_{\{j_1,\dots,j_v\} \in [b]^{(v)}}\overline{i_{j_1} \dots {i_{j_v}}}
\end{align*}
where $0 \leq v \leq b-1, \psi_{1,v}(u)=\lambda_vf_v$ and $\lambda_v \neq 0$ for some $v$ 
, $f_v=f_{(a+b-v, v)}$, $[b]=\{1, 2, \dots, b\}$ and for a set $X, X^{(v)}$ denotes the set of all subsets of size $v$ of X. 

Therefore, \textbf{Problem \ref{non-vanishing hom}} can be solved in the case of $2$-part partitions.
\end{Theorem}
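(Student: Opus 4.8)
The plan is to read off the linear constraints that Hemmer's criterion imposes on the coefficients of $u$ and to recognise them, verbatim, as the defining equations of an integral design; the existence of $u$ is then supplied by the Graver--Jurkat--Wilson theorem (Theorem~\ref{mainDesign}), and the only genuinely new work is to guarantee that $u$ survives reduction modulo $p$.

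First I would set up the dictionary between $M^{(a,b)}$ and designs on the ground set $V=\{1,\dots,a+b\}$. A two-part tabloid is determined by its second row, so a general element $u=\sum_x c(x)\,\overline{x}$ is the same datum as an assignment of integer multiplicities $c(x)$ to the $b$-subsets $x\subseteq V$ (and $0$ to every other subset). Evaluating $\psi_{1,v}$ term by term gives
\[
\psi_{1,v}(u)=\sum_{|y|=v}\Bigl(\sum_{x\supseteq y}c(x)\Bigr)\overline{y}=\sum_{|y|=v}\hat{c}(y)\,\overline{y},
\]
so demanding $\psi_{1,v}(u)=\lambda_v f_v$ for every $0\le v\le b-1$ says exactly that $\hat{c}(y)$ depends only on $|y|=v$ and equals $\lambda_v$ there. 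This is precisely the statement that $c$ is an integral $(a+b,\,b,\,\lambda_0,\dots,\lambda_{b-1})$-design, with block size $l=b$ and strength $t=b-1$; since $a\ge b$ we have $t\le l\le (a+b)-t$, so we sit in the favourable range of the dimension formulas.

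Second, by Theorem~\ref{mainDesign} such a design exists over $\mathbb{Z}$ if and only if $\lambda_{s+1}=\tfrac{b-s}{a+b-s}\lambda_s$ for $0\le s<b-1$, and this recurrence is equivalent to equation~\eqref{star}. Solving it yields $\lambda_s=\lambda_0\binom{b}{s}/\binom{a+b}{s}$, so I would take $\lambda_0$ to be the least common multiple of the denominators, in lowest terms, of these fractions; then every $\lambda_s$ is an integer and the inductive construction recalled in the proof of Theorem~\ref{mainDesign}, built from extensions, suspensions and pods, produces an explicit integral design $c$. Reducing $c$ modulo $p$ gives $u\in M^{(a,b)}_k$, and because each $\psi_{1,v}$ is defined over $\mathbb{Z}$, reduction commutes with it, so $\psi_{1,v}(u)=\bar{\lambda}_v f_v$.

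The crux, and the step I expect to be the main obstacle, is to show that the $\bar{\lambda}_v$ are not all zero, since otherwise every $\psi_{1,v}(u)$ would vanish and Hemmer's condition (i) would fail. With the minimal choice of $\lambda_0$ this is clean: if $p\mid\lambda_s$ for every $s$, then in particular $p\mid\lambda_0$, and $\lambda_0/p$ would still be a positive integer clearing all denominators, since $\lambda_s/p=(\lambda_0/p)\binom{b}{s}/\binom{a+b}{s}\in\mathbb{Z}$, contradicting minimality; hence some $\bar{\lambda}_v\neq0$ in $k$ and $u$ meets condition (i) of Theorem~\ref{ext}. Finally, to settle Problem~\ref{non-vanishing hom} I would specialise to the partitions with $H^0(\mathfrak{S}_d,S^{(a,b)})\neq0$, equivalently $a\equiv-1\pmod{p^{l_p(b)}}$: for these Corollary~\ref{cor225} forces $\binom{a+b-v}{b-v}\equiv0\pmod p$ for all $0\le v\le b-1$, whence $\psi_{1,v}(f_\lambda)=0$ and condition (ii) collapses onto condition (i), exactly as asserted in the Remark after Theorem~\ref{ext}. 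Thus $u$ satisfies Hemmer's criterion and $H^1(\mathfrak{S}_d,S^{(a,b)})\neq0$. The point I would guard most carefully throughout is the very first one: that the system cut out by Hemmer's criterion is identical, coefficient by coefficient, to the integral-design equations, since this is what licenses importing the design machinery wholesale.
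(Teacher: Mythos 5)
Your proposal is correct and follows essentially the same route as the paper: identify the conditions $\psi_{1,v}(u)=\lambda_v f_v$ with the defining equations of an integral $(a+b,b,\lambda_0,\dots,\lambda_{b-1})$-design, invoke the Graver--Jurkat--Wilson existence theorem, and normalize the parameters so that all $\lambda_s$ are integers with at least one not divisible by $p$. Your normalization (least common multiple of denominators, with a minimality argument for non-vanishing mod $p$) differs only cosmetically from the paper's choice $\lambda_s = p^{-d}\binom{a+b-s}{a}$ with $d$ the minimal $p$-adic valuation, and your explicit verification of Hemmer's condition (ii) via $\psi_{1,v}(f_\lambda)=0$ makes precise what the paper leaves to its Remark.
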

\begin{proof}
Note that such a $u$ is equivalent to an integral $(a+b, b, \lambda_0, \dots, \lambda_{b-1})$-design with the parameters $\lambda_i's$ satisfying the sufficient conditions in Theorem~\ref{mainDesign} \emph{and} $\lambda_i \not\equiv 0 \pmod{p}$, for some $i$, since we are working over $k$. How do we find such $\lambda_i's$? We need $\lambda_{s+1} = \frac{b-s}{a+b-s}\lambda_s \ \forall 0\leq s < b-1$, i.e.
\begin{align*}
\lambda_1 &= \frac{b}{a+b}\lambda_0,\\
\lambda_2 &= \frac{b-1}{a+b-1}\frac{b}{a+b}\lambda_0,\\
&\dots
\end{align*}
\begin{align*}
\lambda_{s+1} &=\frac{b-s}{a+b-s} \dots \frac{b}{a+b}\lambda_0 \\
&=\frac{b!}{(b-s-1)!}\frac{(a+b-s-1)!}{(a+b)!}\lambda_0\\
&=\frac{\binom{a+b-s-1}{a}}{\binom{a+b}{a}}\lambda_0,\\
&\dots
\end{align*}
\noindent
Let $d = \text{min}\{ v_p(\binom{a+b-s-1}{a})\}$, where the minimum is taken over the set $\{0, 1, \dots, b-1\}$, and put $\lambda_s = p^{-d}\binom{a+b-s-1}{a}$.
%
 Note that our choice of $\lambda_s$ ensures that all the $\lambda_s's$ are integers. Construct an integral $(a+b, b, \lambda_0,\dots, \lambda_{b-1})$-design $c$ as in Theorem~\ref{mainDesign} and let 
\[
u = \sum_{y \in M^{(a,b)}}c(y)y.
\]
\noindent
Then $\psi_{1,v}(u) = \lambda_v f_v$ by Theorem~\ref{mainDesign} and $\lambda_0 \not\equiv 0 \pmod{p}$ or $\lambda_{s_0} \not\equiv 0 \pmod{p}$ by our choice of $\lambda_0$. 
\end{proof}
\section{Three-part partitions and beyond.}
The next natural step would be to generalize Graver-Jurkat's method to solve our problem for arbitrary partitions. For $3$-part partitions $\lambda= (a,b,c)$, $\lambda$-tabloids are determined by the second and the third rows. 

\noindent
\emph{Example.} Let $\overline{s}$ denote a set of size $s$. Then an element of $M^{(a, b, c)}$ can be represented as $\frac{\ \overline{b}\ }{\ \overline{c}\ }$.

We have
\begin{align*}
\psi_{1,v}: M^{(a, b, c)} &\mapsto M^{(a+b-v, v, c)}\\
\frac{\ \overline{b}\ }{\ \overline{c}\ } &\mapsto  \sum_{\overline{v} \subseteq \overline{b}}\frac{\ \overline{v}\ }{\ \overline{c}\ }
\end{align*}

\noindent
for $0 \leq v \leq b-1$, and
\begin{align*}
\psi_{2,w}: M^{(a,b,c)} &\mapsto M^{(a,b+c-w,w)}\\
\frac{\ \overline{b}\ }{\ \overline{c}\ } &\mapsto \sum_{\overline{w} \subseteq \overline{c}} \frac{\ \overline{b} \cup (\overline{c}\smallsetminus \overline{w})\ }{\ \overline{w}\ }
\end{align*}

\noindent
for $0 \leq w \leq c-1$.

We want to find a $u = \sum_{\overline{b}, \overline{c}}\alpha(\overline{b},\overline{c})\frac{\ \overline{b}\ }{\ \overline{c}\ }$ such that $\psi_{1,v}(u) = \lambda_{v}^1 f_{a+b-v, v,c}$ and $\psi_{2,w}(u) = \lambda_{v}^2 f_{a,b+c-w,w}$, where $\alpha(\overline{b}, \overline{c}) \in k$, $\lambda_v^1$ and $\lambda_{w}^2$ not all zero for $0 \leq v \leq b-1$ and $0 \leq w \leq c-1$.

Note that if we fix the third row of the tabloids involved in $u$, then we have again an integral $(a+b, b, \lambda^1_0, \dots, \lambda^1_{b-1})$-design. Furthermore, if instead we fix the first row of those tabloids, we have an integral $(b+c, c, \lambda^2_0, \dots, \lambda^2_{c-1})$-design. Therefore, by Corollary~\ref{dim},
\begin{align*}
\text{dim}_{\mathbb{Z}}\bigcap_{v = 0}^{b-1}\psi_{1,v}^{-1}(\mathbb{Z}f_{(a+b-v, v, c)}) &= \binom{a+b+c}{c} \left( \binom{a+b}{b} - \binom{a+b}{b-1}+1\right),\\
\text{dim}_{\mathbb{Z}}\bigcap_{w=0}^{c-1}\psi_{2,w}^{-1}(\mathbb{Z}f_{(a, b+c -w, w)}) &= \binom{a+b+c}{a} \left( \binom{b+c}{c} - \binom{b+c}{c-1}+1\right).
\end{align*}

\noindent
Clearly, we have the required $u$ if the system of integral designs coming from $\psi_{1,v}$'s overlaps with those coming from $\psi_{1,w}$'s non-trivially i.e. 
\[(
\cap_{v = 0}^{b-1}\psi_{1,v}^{-1}(\mathbb{Z}f_{(a+b-v, v, c)})) \bigcap (\cap_{w=0}^{c-1}\psi_{2,w}^{-1}(\mathbb{Z}f_{(a, b+c -w, w)})) \text{ strictly contains $S^{(a, b, c)}$}.
\] This will be true if
\[
\text{dim}_{\mathbb{Z}} S^{(a, b, c)} < \text{dim}_{\mathbb{Z}}\bigcap_{v = 0}^{b-1}\psi_{1,v}^{-1}(k) + \text{dim}_{\mathbb{Z}}\bigcap_{w=0}^{c-1}\psi_{2,w}^{-1}(k) - \text{dim}_{\mathbb{Z}} M^{(a, b ,c)}.
\]

\begin{Theorem}\label{3part}
We have
\[
\dim_{\mathbb{Z}} S^{(a, b, 1)} < \dim_{\mathbb{Z}}\bigcap_{v = 0}^{b-1}\psi_{1,v}^{-1}(k) + \dim_{\mathbb{Z}}\psi_{2,0}^{-1}(k) - \dim_{\mathbb{Z}} M^{(a, b ,1)},
\] 
i.e. \textbf{Problem~\ref{non-vanishing hom}} can be solved for partitions of the form $(a,b,1)$ .\end{Theorem}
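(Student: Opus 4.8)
The plan is to deduce the existence of a suitable $u$ from a dimension count, exactly as the inequality is designed to permit. Write $P_1 := \bigcap_{v=0}^{b-1}\psi_{1,v}^{-1}(k)$ and $P_2 := \psi_{2,0}^{-1}(k)$ for the two preimages of the fixed lines inside $M^{(a,b,1)}$. Since $0 \in k$ we have $\ker\psi_{i,v}\subseteq\psi_{i,v}^{-1}(k)$, and the Kernel Intersection Theorem (Theorem~\ref{kernel}), applied to $(a,b,1)$ with $i=1,\ 0\le v\le b-1$ and $i=2,\ w=0$, gives $S^{(a,b,1)}=\bigcap_{v}\ker\psi_{1,v}\cap\ker\psi_{2,0}\subseteq P_1\cap P_2$. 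Working with $\mathbb{Z}$-ranks (equivalently over $\mathbb{Q}$), the elementary bound $\dim(P_1\cap P_2)\ge\dim P_1+\dim P_2-\dim M^{(a,b,1)}$ shows that the stated inequality forces $\dim(P_1\cap P_2)>\dim S^{(a,b,1)}$, so that $P_1\cap P_2$ strictly contains $S^{(a,b,1)}$ and an element $u\notin S^{(a,b,1)}$ exists. Thus the whole theorem reduces to verifying the numerical inequality.

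Next I would specialise to $c=1$ the two dimension formulas displayed just before the theorem, which come from Corollary~\ref{dim} by fibring tabloids over a fixed row. The first gives $\dim P_1=(a+b+1)\big(\binom{a+b}{b}-\binom{a+b}{b-1}+1\big)$, obtained by freezing the single third-row entry and recognising each fibre as a design space $C_{b-1,b}(a+b)$. In the second formula, $c=1$ makes the bracket $\binom{b+1}{1}-\binom{b+1}{0}+1=b+1$, so $\dim P_2=\binom{a+b+1}{a}(b+1)=\dim M^{(a,b,1)}$; here freezing the first row yields the spaces $C_{0,1}(b+1)$, on which the condition $t=0$ is vacuous. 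This collapse is what makes the $(a,b,1)$ case tractable: the term $\dim P_2-\dim M^{(a,b,1)}$ vanishes and the target inequality becomes simply
\[
\dim_{\mathbb{Z}} S^{(a,b,1)} < (a+b+1)\left(\binom{a+b}{b}-\binom{a+b}{b-1}+1\right).
\]

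To finish I would compute the left-hand side by the hook-length formula and compare. The key observation is that $\binom{a+b}{b}-\binom{a+b}{b-1}=\dim S^{(a,b)}$ is the dimension of the two-row Specht module, so the right-hand side equals $(a+b+1)(\dim S^{(a,b)}+1)$. A short manipulation of hook lengths gives the exact ratio
\[
\dim S^{(a,b,1)}=\frac{(a+b+1)(a+1)b}{(a+2)(b+1)}\,\dim S^{(a,b)}.
\]
Since $(a+1)b<(a+2)(b+1)$ (equivalently $0<a+b+2$), the fraction is $<1$, whence $\dim S^{(a,b,1)}<(a+b+1)\dim S^{(a,b)}<(a+b+1)(\dim S^{(a,b)}+1)$, which is the desired strict inequality with room to spare.

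With the inequality in hand, the remaining work is to cash it out as an element for Hemmer's criterion, and this is where the real care lies rather than in the counting. From $P_1\cap P_2\supsetneq S^{(a,b,1)}$ one extracts, over $\mathbb{Z}$, a $u$ that is simultaneously an $(a+b,b,\dots)$-design in each frozen third row and satisfies the $\psi_{2,0}$-condition in each frozen first row; as in Theorem~\ref{main} one must then arrange, by clearing a common power of $p$ from the design parameters, that at least one multiple $\lambda_v$ survives modulo $p$, so that $\psi_{1,v}(u)$ is a nonzero multiple of $f_v$ over $k$ and condition (i) of Theorem~\ref{ext} holds with $u\notin S^{(a,b,1)}$. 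For the partitions $(a,b,1)$ to which Problem~\ref{non-vanishing hom} applies one has $H^0(\mathfrak{S}_{a+b+1},S^{(a,b,1)})\neq0$ by Theorem~\ref{H0}, i.e.\ $a\equiv-1\pmod{p^{l_p(b)}}$ and $b\equiv-1\pmod p$, so condition (ii) is automatic by the Remark following Theorem~\ref{ext}. I expect the main obstacle to be exactly this last step, guaranteeing the mod-$p$ nonvanishing of the surviving multiple while keeping $u$ integral, rather than the dimension inequality itself, which dissolves once the hook-length ratio above is recognised.
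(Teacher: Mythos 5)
Your proposal is correct and takes essentially the same route as the paper: both reduce the theorem to a numerical inequality by combining the fibrewise design-space dimension formulas (Corollary~\ref{dim}) displayed just before the theorem with the subspace bound $\dim(P_1\cap P_2)\ge\dim P_1+\dim P_2-\dim M^{(a,b,1)}$, and then verify that inequality using the hook-length formula. The only difference is organizational: you specialize to $c=1$ at the outset, note that the quoted formula gives $\dim P_2=\dim M^{(a,b,1)}$ so the target collapses to $\dim S^{(a,b,1)}<\dim P_1$, and finish via the ratio $\dim S^{(a,b,1)}=\tfrac{(a+b+1)(a+1)b}{(a+2)(b+1)}\dim S^{(a,b)}$, whereas the paper grinds the general-$c$ inequality down to its display $(\dagger)$ before setting $c=1$ --- the cancellation of $\tfrac{1}{b+1}$ on both sides of $(\dagger)$ is exactly your collapse in different clothing.
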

\begin{proof}
Let $d = a+b+c$. By a theorem of Frame, Robinson and Thrall~\cite{FRT},
\begin{align*}
&\text{dim}_{\mathbb{Z}}S^{(a, b, c)} = \frac{d!}{\prod (\text{hook lengths in} [(a, b, c)])} \\
&= \frac{d!}{(a+2)\dots (a-c+3)(a-c+1)\dots (a-b+2)(a-b)! (b+1)\dots (b-c+2)(b-c)! c!}\\
&=\frac{d!(a-c+2)(a-b+1)(b-c+1)}{(a+2)!(b+1)!c!}.
\end{align*}
Also, by $4.2$ in James~\cite{James78},
\[
\text{dim}_{\mathbb{Z}} M^{(a, b, c)} = \frac{d!}{a!b!c!}
\]
 We have
\begin{align*}
&\frac{d!(a-c+2)(a-b+1)(b-c+1)}{(a+2)!(b+1)!c!} + \frac{d!}{a!b!c!} \\
&<  \binom{d}{c} \left( \binom{a+b}{b} - \binom{a+b}{b-1}+1\right) + \binom{d}{a} \left( \binom{b+c}{c} - \binom{b+c}{c-1}+1\right)\\
\iff &\frac{d!(a-c+2)(a-b+1)(b-c+1)}{(a+2)!(b+1)!c!} + \frac{d!}{a!b!c!}+\binom{d}{c}\binom{a+b}{b-1}+\binom{d}{a}\binom{b+c}{c-1}\\
&<\binom{d}{c}\binom{a+b}{b}+\binom{d}{a}\binom{b+c}{c}+\binom{d}{c}+\binom{d}{a}\\
\iff &\frac{d!(a-c+2)(a-b+1)(b-c+1)}{(a+2)!(b+1)!c!} + \frac{d!}{a!b!c!} +\frac{d!}{c!(a+b)!}\frac{(a+b)!}{(b-1)!(a+1)!} + \frac{d!}{a!(b+c)!} \cdot\\
&\cdot\frac{(b+c!)}{(c-1)!(b+1)!}
<\frac{d!}{c!(a+b)!}\frac{(a+b)!}{b!a!}+\frac{d!}{a!(b+c)!}\frac{(b+c)!}{b!c!}+\frac{d!}{c!(a+b)!}+\frac{d!}{a!(b+c)!}\\
\iff & \frac{1}{c!(a+1)!(b-1)!}+\frac{1}{a!(c-1)!(b+1)!}+\frac{(a-c+2)(a-b+1)(b-c+1)}{(a+2)!(b+1)!c!}\\
<&\frac{1}{a!b!c!}+\frac{1}{c!(a+b)!}+\frac{1}{a!(b+c)!}\\
\iff &\frac{1}{(a+1)!(b-1)!}+\frac{c}{a!(b+1)!}+\frac{(a-c+2)(a-b+!)(b-c+1)}{(a+2)!(b+1)!}\\
<&\frac{1}{a!b!} +\frac{1}{(a+b)!}+\frac{c!}{a!(b+c)!}\\
\iff &\frac{b}{a+1}+\frac{c}{b+1}+\frac{(a-c+2)(a-b+1)(b-c+1)}{(a+2)(a+1)(b+1)}
< 1 +\frac{a!b!}{(a+b)!}+\frac{c!b!}{(b+c)!} \label{dagger}\tag{$\dagger$}
\end{align*}
\noindent
If $c =1$, then \eqref{dagger} becomes
\[
\frac{b}{a+1}+\frac{1}{b+1}+\frac{(a+1)(a-b+1)b}{(a+2)(a+1)(b+1)} < 1 + \frac{a!b!}{(a+b)!}+\frac{b!}{(b+1)!}.
\]
This is equivalent to
\[
\frac{(a+1)(a-b+1)b}{(a+2)(a+1)(b+1)} < \frac{a-b+1}{a+1} + \frac{a!b!}{(a+b)!},
\]
\noindent
which is clearly true since $\frac{(a+1)b}{(a+2)(b+1)} < 1$, so we are done.
\end{proof}

\section{Conclusion}
In this paper, we aimed to construct the desired element $u$ of the permutation module $M^\lambda$ of the symmetric group $\mathfrak{S}_n$, satisfying the necessary and sufficient conditions found by Hemmer~\cite{Hemmer12} for the cohomology group $H^1(\mathfrak{S}_n, S^\lambda)$ to be non-zero by specializing first to two-part partitions. This leads to an unexpected and surprising connection of combinatorial $t$-design theory to the representation theory of the symmetric groups. 

The next natural step would be to generalize the method by Graver-Jurkat to solve our problem for arbitrary partitions where now instead of one integral design, we have a system of integral designs. For example, for $3$-part partitions $(a, b, c)$, we have seen that fixing the third row of the tabloids involved in $u$ gives an $(a+b,b,\lambda_0,...,\lambda_{b-1})$-design for $\psi_{1, v}$, while fixing the first row of tabloids involved in $u$ gives a $(b + c, c, \alpha_0, . . . , \alpha_{c-1}$)-design for $\psi_{2,v}$. Note that constructing a $u$ for $s$-part partitions from a $u$ for $(s-1)$-part partitions satisfied conditions of \textbf{\emph{Problem 1}} is equivalent to constructing a map
\[
\Theta: H^1(\mathfrak{S}_n, S^\lambda) \to H^1(\mathfrak{S}_{n+a}, S^{(a, \lambda_1,...,\lambda_{s-1})}), 
\]
\noindent
where  $\lambda = (\lambda_1, . . . , \lambda_{s-1}) \vdash n$ and $a \equiv (-1) \pmod{p^{l_p(\lambda_1)}}$.

Thus it appears that this approach can be used to tackle the case $i = 1$ in the following problem posed by Hemmer~\cite{Hemmer12}:

\begin{Problem}
Does the isomorphism $H^i(\mathfrak{S}_n, S^\lambda) \cong H^i(\mathfrak{S}_{n+a}, S^{(a,\lambda_1,...,\lambda_{s-1})})$ hold for $i > 0$?
\end{Problem}

Alternatively, one could investigate the space $\bigcap_{v=0}^{\lambda_i-1} \psi^{-1}(k)$ and try to recast Graver and Jurkat's construction in terms of tabloids and mimic James's formulation of the Specht modules as the kernel intersection. If this is successful, we will then be able find all
u satisfying conditions (i) in \textbf{Theorem~\ref{ext}}. Combining this with Weber's work ~\cite{Weber}, we will be able to find when $H^1(\mathfrak{S}_n, S^\lambda)\neq 0$ for all partitions with at least five parts.

Another problem could perhaps be tackled using this approach is

\begin{Problem} [Hemmer ~\cite{Hemmer12}] Let $\lambda \vdash n$ and let $p > 2$. Then there is an isomorphism 
\[
H^1(\mathfrak{S}_{pn}, S^{p\lambda}) \cong H^1(\mathfrak{S}_{p^2n},S^{p^2\lambda})\ (*),
\]
 where for a partition $\lambda = (\lambda_1, \lambda_2, \dots )$, define $p\lambda = (p\lambda_1, p\lambda_2, \dots) \vdash pn$ . Suppose $H^1(\mathfrak{S}_{pn}, S^{p\lambda})\neq 0$ and suppose one has constructed $u \in M^{p\lambda}$ satisfying Theorem~\ref{ext}, describe a general method to construct  $\tilde{u} \in M^{p^2\lambda}$ corresponding to an element in $H^1(\mathfrak{S}_{p^2n}, S^{p^2\lambda})$ and realizing the isomorphism $(*)$.
\end{Problem}

\section{Acknowledgements}
This paper is based on results of my PhD thesis~\cite{HaThuThesis} prepared at the University of Cambridge. I would like to thank my supervisor Dr Stuart Martin for his encouragement and support.

\bibliography{HTreferences}
\bibliographystyle{plain}
\end{document}